\def\makeheadbox{{%
\hbox to0pt{\vbox{\baselineskip=10dd\hrule\hbox
to\hsize{\vrule\kern3pt\vbox{\kern3pt
\hbox{\bfseries Preprint}
\kern3pt}\hfil\kern3pt\vrule}\hrule}%
\hss}}}
\newcommand{\R}{\mathbb{R}}
\newcommand{\N}{\mathbb{N}}
\renewcommand{\d}{\mathrm{d}}
\newcommand{\dx}{\d x}
\newcommand{\dy}{\d y}
\newcommand{\const}{\mathrm{const}}
\newcommand{\Wloc}[1]{W^{1,#1}_{\textup{loc}}(\Omega)}
\DeclareMathOperator{\Int}{\operatorname{int}}
\DeclareMathOperator{\dom}{\operatorname{dom}}
\DeclareMathOperator{\dist}{\operatorname{dist}}
\newtheorem{thm}{Theorem}
\newtheorem{lemma}[thm]{Lemma}
\newtheorem{corollary}[thm]{Corollary}
\newtheorem{prop}[thm]{Proposition}
\DeclarePairedDelimiter\abs{\lvert}{\rvert}
\providecommand\given{\nonscript\;\delimsize|\nonscript\;}
\DeclarePairedDelimiterX\set[1]\{\}{#1}
\DeclarePairedDelimiterX\seq[1](){#1}
\DeclarePairedDelimiter\bracks[]
\newcommand{\mscLink}[1]{\href{https://www.ams.org/mathscinet/msc/msc2010.html?t=#1}{#1}}
\begin{document}

\title{%
	Non-optimality of conical parts for Newton's problem of minimal resistance in the class of convex bodies and the limiting case of infinite height 
}
\titlerunning{Non-optimality of conical parts for Newton's problem and the limiting case}
\date{\today}

\author{%
	Lev Lokutsievskiy
	\and
	Gerd Wachsmuth
	\and
	Mikhail Zelikin
}
\authorrunning{L. Lokutsievskiy, G. Wachsmuth, M. Zelikin}

\institute{
	Lev Lokutsievskiy \at
	Steklov Mathematical Institute of Russian Academy of Sciences, Moscow, Russia,
	ORCID: \href{https://orcid.org/0000-0002-8083-4296}{0000-0002-8083-4296}
	\and
	Gerd Wachsmuth\at
	Brandenburg University of Technology, Cottbus-Senftenberg, Germany,
	ORCID: \href{https://orcid.org/0000-0002-3098-1503}{0000-0002-3098-1503}
	\and
	Mikhail Zelikin \at
	Lomonosov Moscow State University, Moscow, Russia
}

\maketitle
 
\begin{abstract}
	We consider Newton's problem of minimal resistance, in particular
	we address
	the problem arising in the limit
	if the height goes to infinity.
	We establish existence of solutions and
	lack radial symmetry of solutions.
	Moreover,
	we show
	that certain conical parts contained in the boundary
	of a convex body inhibit the optimality
	in the classical Newton's problem with finite height.
	This result is applied to
	certain bodies considered in the literature, which are
	conjectured to be optimal for the classical Newton's problem,
	and we show that they are not.
\end{abstract}
\keywords{
	Newton's problem of minimal resistance \and
	Conical parts \and
	Convex bodies
}
\subclass{
	\mscLink{49K99} \and
	\mscLink{49Q10} \and
	\mscLink{52A15}
}

\section{Introduction}
\label{sec:introduction}
One of the first 
problems in calculus of variations
is a least resistance problem posed by Newton in his \emph{Principia}.
A three-dimensional body with base $\Omega \subset \R^2$ is travelling
in negative $z$-direction.
The upper boundary of the body is given by $\Omega \times \set{0}$,
while the lower boundary is described by the graph of a function $u \colon \Omega \to [-M, 0]$,
where $M > 0$ is the height of the body.
The medium around the body is assumed to be very rare
and under the assumption that each particle collides only once with the body,
one arrives at the resistance
\begin{equation*}
	J(u)
	=
	\int_\Omega \frac{1}{\abs{\nabla u}^2+1} \, \dx\wedge\dy,
\end{equation*}
see \cite{Buttazzo1995,Buttazzo2009}.
In order to comply with the single-impact condition,
one typically considers the convex situation,
namely, $\Omega \subset \R^2$ is assumed to be convex
and $u \colon \Omega \to [-M, 0]$ is convex as well.
We denote the set of all such functions by $C_M\subset\Wloc{2}$.

As we have mentioned, Newton obtained his resistance functional $J$ 
under the assumption of a rare medium. Despite this fact,
in the 20th century, it has been discovered (see \cite[Chapter~III]{HayesProbstein1964}, \cite[\S23]{Cherny}) that
$J$ also describes accurately enough the resistance
of a convex body moving in
dense media with hypersonic speed.
Alternatively, the resistance for hypersonic speeds 
can be computed by the Buseman formula, 
which usually gives better accuracy for non-convex bodies,
but is worse for convex ones \cite[\S23]{Cherny}.

For $\Omega$ being the unit disc,
Newton found an optimal solution among all 
convex bodies of revolution.
Newton's solution has a very non-trivial peculiarity:
its lateral boundary is strictly convex,
but the lower part is a flat disc,
and these parts adjoin each other
by a corner of $45^\circ$.
All standard facts about the problem
can be found in a very well written
survey~\cite{Buttazzo2009}.

Newton's result \cite{Newton} was published in \citeyear{Newton},
exactly $\frac13$ of a millennium ago.
Since that until the end of the 20th century,
it was assumed that
the Newton's body has minimal resistance
among all convex bodies.
Only in 1996, 
Guasoni (in his ``Tesi di Laurea'' \cite{Guasoni}
under the supervision of Buttazzo)
found a ``screwdriver'' shape
that has less resistance
than the one found by Newton
of the same base and height $M\ge 2$.
An analytical argument for the non-optimality of Newton's
solution is given in \cite{BrockFeroneKawohl1996}.

According to \cite[Theorem 2.1]{Buttazzo1995}, 
an optimal body exists 
in the class of convex bodies 
with given base and height. 
There are some analytical results 
on the structure of optimal bodies.
Let $\Omega$ be the unit disc and let the
convex function $u:\Omega\to[-M,0]$ 
describe the shape of an optimal body
for some given height $M>0$. Then

\begin{itemize}
	\item $|\nabla u(x,y)|\in\{0\}\cup[1,+\infty)$ 
	for a.e.\ $(x,y)\in\Omega$, 
	\cite[Theorem~3.2]{Buttazzo2009};
	
	\item $\lim_{(x,y)\to\partial \Omega}u(x,y)=0$, 
	\cite[Theorem~2]{Plakhov2019ANO};
	
	\item if $\omega$ is an open subset of $\Omega$ and $u\in C^2(\omega)$,
	then $u$ is not strictly convex on $\omega$,
	see
	\cite[Remark~3.4]{BrockFeroneKawohl1996} or
	\cite[Lemma~1]{LachandRobertPeletier2001:2}
	(more general results were obtained in \cite[Theorem~1]{LachandRobertPeletier2001:2} and \cite[Theorem~2]{Plakhov2020ExtremePoints});
	
	\item $u$ is not radially symmetric \cite[Theorem~3.4]{Buttazzo2009}.
\end{itemize}

\noindent
Moreover,
this lack of strict convexity implies that
the Euler-Lagrange equations cannot be used to solve the problem,
cf.\ \cite[Theorem~3.5]{Buttazzo2009}.

There are several numerical results \cite{LachandNumeric,Wachsmuth},
which give very good approximations
of optimal bodies
due to \cite[Theorem~2]{LokutsievskiyZelikin}.

In \cite{LokutsievskiyZelikin2}, 
the hypothesis of rotational symmetry 
was replaced by 
the less restrictive hypotheses of 
(i) mirror symmetry w.r.t.\ a vertical plane and 
(ii) developable structure of the side boundary.
Let us remark that all existing aircraft and ships, 
to say nothing of living creatures, have such symmetry.
We have obtained a remarkable formula 
that describes a curve in the plane of symmetry 
and proved that the convex hull of this curve and $\Omega\times\{0\}$ 
is locally optimal in the considered class of admissible bodies,
see \cite[Theorem~9.1]{LokutsievskiyZelikin2}.

\medskip

\textit{The most astonishing fact concerning Newton's problem is that
the exact shapes of optimal bodies in $C_M$
are still unknown.}

\medskip

There were suggested 
a lot of different shapes
as candidates that were considered
as possible solutions to Newton's problem 
in the class of convex bodies,
see \cite{LachandRobertPeletier2001,Wachsmuth,LokutsievskiyZelikin2}.
Some of these profiles contain conical parts on their boundaries. 
We investigate this situation and prove that optimal bodies
cannot contain conical parts of certain type (see \cref{sec:nonoptimality} and \cref{thm:nonoptimality2}).
We use these results in \cref{sec:non-optimal_suggestions} to prove 
non-optimality of all bodies conjectured in the literature.

We also study what is happening in the limiting case $M \to \infty$
by a rescaling $\hat u = u / M$.
It seems that this auxiliary limiting problem was not studied so far, 
but it is extremely useful for studying Newton's problem for large heights (see \cref{subsec:lokutsievskiy_zelikin_solution}).
Our non-optimality result also extends
to this infinite-height case.
Moreover, we reestablished classical results mentioned above 
for this limiting problem. Precisely, 
we prove that an optimal body in the limiting problem exists 
in the class of convex bodies (see \cref{thm:limit_problem}). 
Let $\Omega$ be the unit disc and let the
convex function $u\colon\Omega\to[-1,0]$ 
describe the shape of an optimal body
for the limiting problem. Then we show that

\begin{itemize}
	\item $\lim_{(x,y)\to\partial \Omega}u(x,y)=0$ 
	(in fact, this immediately follows from \cite[Theorem~2]{Plakhov2019ANO});
	
	\item if $\omega$ is an open subset of $\Omega$ and $u\in C^2$,
	then $u$ is not strictly convex on $\omega$ (see \cref{sec:props});
	
	\item $u$ is not radially symmetric (see \cref{thm:limiting_solution_is_not_radially_symmetric}).
\end{itemize}

\section{Notation and Preliminaries}
\label{sec:notation_preliminaries}
Let $\Omega\subset\R^n$ be a
compact convex domain with nonempty interior, i.e., $\Int\Omega\ne\emptyset$.
For some fixed height $M > 0$,
we define the class of functions\footnote{We consider only closed (i.e., lower semicontinuous) convex function due to the following two reasons. First, $J(u)=J(\mathrm{cl}\,u)$ for any convex function $u$, and hence, $\mathrm{cl}\,u$ is a canonical representative for $u$ in the Sobolev space $W^{1,1}_{\textup{loc}}(\Omega)$. Second, for closed convex functions, the mentioned result by Plakhov \cite{Plakhov2019ANO} can be stated in a very nice way: if $u\in C_M$ is optimal, then $u|_{\partial\Omega}=0$.}
\begin{equation*}
	C_M
	:=
	\set[\big]{
		u \colon \Omega \to [-M, 0]
		\given
		\text{$u$ is convex and closed}
	}
	.
\end{equation*}
Note that each $u \in C_M$ is locally Lipschitz
in $\Int\Omega$ and, therefore, differentiable a.e.
Hence, we can define the objective
$J \colon C_M \to \bar\R$ with $\bar\R := \R \cup \set{\infty}$
via
\begin{equation*}
	J(u)
	:=
	\int_\Omega \frac{1}{\abs{\nabla u}^2+1} \, \dx\wedge\dy
	\qquad
	\forall u \in C_M.
\end{equation*}
Now,
Newton's problem of least resistance is
given by
\begin{equation}
	\label{problem:J}
	J(u) \to \min_{u\in C_M}
	.
\end{equation}
The classical case considered by Newton
uses the two-dimensional unit disc
$\Omega := \set{(x,y) \in \R^2 \given x^2 + y^2 \le 1 }$.
In this case,
the problem is rotationally symmetric.
Under the additional condition that the solution
is rotationally symmetric as well,
Newton was able to solve the problem,
see~\cite{Buttazzo2009}.

Buttazzo, Ferone and Kawohl proved in \cite{Buttazzo1995} that there exists an
optimal solution for any $M > 0$ and any $\Omega$ (as above).
This solution might not be unique.
Indeed, on one hand, in the classical case,
it was shown by \cite{BrockFeroneKawohl1996},
that Newton's rotationally symmetric body
is not a solution in the class $C_M$.
On the other hand, it is the unique solution
among all bodies of revolution.
Hence, any optimal solution in $C_M$
cannot be rotationally symmetric.
Since rotations of any solution 
to the classical problem
are also solutions,
a solution cannot be unique.
Moreover, it is clear that the set of solutions depends on the height $M$.

Let us have a brief look into the existence result of \cite{Buttazzo1995}.
We introduce the space
\begin{equation*}
	\Wloc{p}
	:=
	\set[\big]{
		u \colon \Omega \to \R
		\given
		u \in W^{1,p}(K)
		\text{ for all compact subsets $K$ of $\Int\Omega$}
	}
	,
\end{equation*}
where $p \in [1,\infty]$ is arbitrary,
and we say that
$u_n \to u$ in $\Wloc{p}$ if and only if
$u_n \to u$ in $W^{1,p}(K)$ for all compact subsets $K$ of $\Int\Omega$.
Then,
we have the following result,
see \cite[Theorem~2.1 and Lemma~2.2]{Buttazzo1995}.
\begin{lemma}
	\label{lem:J_and_Wloc}
	For all $M > 0$ and any $p \in [1,\infty)$
	we have $C_M \subset \Wloc{p}$.
	The set $C_M$ is sequentially compact in $\Wloc{p}$,
	i.e., any sequence $\seq{u_n} \subset C_M$
	has a subsequence $\seq{u_{n_k}}$ with $u_{n_k} \to u$ in $\Wloc{p}$
	for some $u \in C_M$.
	Moreover, 
	$u_{n_k} \to u$ everywhere and $\nabla u_{n_k} \to u$ a.e.\ in $\Omega$.
	The 
	functional $J$ is
	sequentially lower semicontinuous on $\Wloc{p}$.
\end{lemma}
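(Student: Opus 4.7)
I would prove the four assertions in turn; the engine that drives everything is a uniform local Lipschitz bound on $C_M$.

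For the inclusion $C_M \subset \Wloc{p}$ it suffices to bound the Lipschitz constant of an arbitrary $u \in C_M$ on an arbitrary compact $K \subset \Int \Omega$. Setting $\delta := \dist(K, \partial\Omega) > 0$, elementary convex analysis (monotonicity of secant slopes combined with $\abs{u} \le M$) yields a Lipschitz constant on $K$ bounded by $M/\delta$, independent of $u$. Hence $u \in W^{1,\infty}(K) \subset W^{1,p}(K)$ for every $p \in [1,\infty)$, and any sequence $\seq{u_n} \subset C_M$ is equibounded and equi-Lipschitz on $K$.

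For the sequential compactness, exhaust $\Int\Omega$ by an increasing family of compact sets $K_m \uparrow \Int\Omega$. On each $K_m$ the Arzelà–Ascoli theorem produces a uniformly convergent subsequence; a diagonal extraction yields $u_{n_k}$ converging uniformly on every $K_m$ to a function $u$ that is convex on $\Int\Omega$, takes values in $[-M,0]$, and whose closed convex extension lies in $C_M$. A further pointwise extraction on the Lebesgue-null boundary $\partial\Omega$ (using $u_n(x) \in [-M,0]$) gives convergence everywhere on $\Omega$. For gradients, I would invoke the classical convex-analytic fact (see e.g.\ Rockafellar, \emph{Convex Analysis}, Theorem~25.7) that pointwise convergence of convex functions forces convergence of gradients at every point of differentiability of the limit; since convex functions are differentiable a.e., this delivers $\nabla u_{n_k} \to \nabla u$ a.e.\ in $\Omega$. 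The equi-Lipschitz estimate provides a uniform $L^\infty$ dominant on each $K_m$, so dominated convergence upgrades the a.e.\ convergence to $L^q(K_m)$ convergence for every $q \in [1,\infty)$; this is precisely $\Wloc{p}$ convergence.

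For the lower semicontinuity of $J$, I would exploit that the integrand $\xi \mapsto 1/(\abs{\xi}^2+1)$ is continuous and bounded by $1$, and that $\Omega$ has finite Lebesgue measure. Given $u_n \to u$ in $\Wloc{p}$ with $u_n, u \in C_M$, pick a subsequence $u_{n_k}$ along which $J(u_{n_k}) \to \liminf_n J(u_n)$; using $L^p_{\textup{loc}}$ convergence of the gradients, extract a further subsequence along which $\nabla u_{n_{k_j}} \to \nabla u$ a.e. Continuity of the integrand plus dominated convergence on $\Omega$ give $J(u_{n_{k_j}}) \to J(u)$, whence $J(u) \le \liminf_n J(u_n)$ (in fact $J$ is continuous on $C_M$ in the $\Wloc{p}$ topology).

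The one genuinely nontrivial ingredient is the gradient convergence step above: extracting a.e.\ convergence of $\nabla u_{n_k}$ from mere pointwise convergence of the convex functions $u_{n_k}$. This is the classical but non-elementary part; once it is in hand, the rest of the lemma follows from Arzelà–Ascoli, a diagonal argument, and dominated convergence, all driven by the uniform Lipschitz bound $M/\delta$.
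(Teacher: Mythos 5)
Your overall route is the standard one and, as far as the paper is concerned, it is the intended one: the paper does not prove this lemma but cites Buttazzo--Ferone--Kawohl (Theorem~2.1 and Lemma~2.2 of \cite{Buttazzo1995}), whose argument is exactly your chain: the uniform Lipschitz bound $M/\delta$ on compacta of $\Int\Omega$, Arzel\`a--Ascoli plus a diagonal extraction over a compact exhaustion, the convex-analysis fact (Rockafellar, Theorem~25.7) that pointwise convergence of convex functions yields convergence of gradients at points of differentiability of the limit, and dominated convergence both to upgrade to $\Wloc{p}$ convergence and to get semicontinuity (indeed continuity, as you note) of $J$ along $\Wloc{p}$-convergent sequences, since the integrand is continuous in $\nabla u$, bounded by $1$, and $\Omega$ has finite measure. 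All of that is correct and complete as a plan.

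The one step that does not work as written is your treatment of the \emph{everywhere} convergence: ``a further pointwise extraction on the Lebesgue-null boundary'' is not available, since $\partial\Omega$ is uncountable and a diagonal argument only handles countably many points; and, more seriously, even if boundary values converged, their limit need not agree with the closed representative $u \in C_M$. Concretely, for $\Omega$ the unit disc and $u_n(x) = M(\abs{x}^n - 1)$ one has $u_n \in C_M$, $u_n \to -M$ in $\Wloc{p}$ and locally uniformly in $\Int\Omega$, so the only closed limit in $C_M$ is $u \equiv -M$, while $u_n \equiv 0$ on $\partial\Omega$ for every $n$; no subsequence converges to $u$ at boundary points. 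So the ``everywhere'' clause must be read as everywhere in $\Int\Omega$ (equivalently, a.e.\ in $\Omega$, since $\partial\Omega$ is a null set), and in that reading no extra extraction is needed at all: it is already contained in your locally uniform convergence on the compact exhaustion. With that correction (and writing $\nabla u_{n_k} \to \nabla u$, as the statement intends), your proof is sound.
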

With this lemma,
the existence of minimizers of \eqref{problem:J}
follows from the direct method of calculus of variations,
see \cite[Theorem~2.1]{Buttazzo1995}.

\section{The Limiting Case of Infinite Height}
\label{sec:limiting_case}
In this section, we study the limit of optimal solutions as $M\to\infty$.
It is easy to see that the minimum of any optimal solution in $C_M$ is $-M$.
Hence, if we want to find the limit shape, we need to reformulate the problem.
Consider the following problem
\begin{equation}
	\label{problem:J_M}
	J_M(u)
	:=
	\int_\Omega \frac{1}{\abs{\nabla u(x)}^2+M^{-2}} \, \dx
	\to
	\min_{u\in \hat C}
	,
\end{equation}
where $\hat C=C_1$ is the set of convex functions $u$ with $\dom u=\Omega$ and $-1\le u\le 0$.
Note that we use $\hat C$ instead of $C_1$ to avoid confusion
with the continuously differentiable functions $C^1$.

Obviously, $J_M(u) = M^2\,J(M\,u)$
and $u \in \hat C$ if and only if $M \, u \in C_M$.
Thus, if $u_M\in \hat C$ is an
optimal solution to problem~\eqref{problem:J_M} then $Mu_M\in C_M$ is an
optimal solution to problem~\eqref{problem:J} and vice versa.
Solutions $\hat u_M$ are bounded in $\Omega$,
and we are interested in a limit (in some sense)
of these solutions as $M\to\infty$.

Problem~\eqref{problem:J_M} is closely connected with the following problem
with limit functional:
\begin{equation}
\label{problem:J_infty}
	J_\infty(u) = \int_\Omega \frac{1}{\abs{\nabla u(x)}^2} \, \dx \to\min_{u\in \hat C}.
\end{equation}
Again, the existence of minimizers follows from \cref{lem:J_and_Wloc},
see \cite[Theorem~2.1]{Buttazzo1995}.

First, let us show how minima in problems~\eqref{problem:J_M}
and~\eqref{problem:J_infty} are connected.

\begin{thm}
\label{thm:limit_problem}
	Let $p \in [1,\infty)$ be given and $\seq{u_M}_{M > 0}$ denote a family of solutions to problems \eqref{problem:J_M}.
	For every increasing sequence $\seq{M_n}_{n \in \N}$ with $M_n \to \infty$,
	the sequence $\seq{u_{M_n}}_{n \in \N}$ possesses an accumulation point
	in $\Wloc{p}$.
	Every such accumulation point is a solution to \eqref{problem:J_infty}.
	Moreover,
	\[
		\lim_{M\to\infty} J_M(u_M) = 
		\min_{u\in \hat C} J_\infty(u) < \infty.
	\]
\end{thm}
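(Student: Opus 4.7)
The idea is to combine the compactness of $\hat C$ in $\Wloc{p}$ from \cref{lem:J_and_Wloc} with the monotone approximation $J_M \to J_\infty$ as $M\to\infty$. Existence of an accumulation point $u^*\in\hat C$ of $\seq{u_{M_n}}_{n\in\N}$ is immediate, since $\hat C=C_1$ is sequentially compact in $\Wloc{p}$ and contains the whole sequence. What needs justification is that every such $u^*$ is a minimizer of $J_\infty$ and that the resistances $J_M(u_M)$ converge to $\min J_\infty$.

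I would first record two elementary observations. For every fixed $u\in\hat C$ the map $M\mapsto J_M(u)$ is monotonically nondecreasing, because $M^{-2}$ decreases and hence $1/(\abs{\nabla u}^2+M^{-2})$ increases pointwise; moreover, $J_M(u)\to J_\infty(u)$ by the monotone convergence theorem. To see that $\min J_\infty<\infty$, fix $x_0\in\Int\Omega$ and let $\rho$ be the Minkowski gauge of the convex body $\Omega-x_0$. Then $v(x):=\rho(x-x_0)-1$ belongs to $\hat C$ and, combining Euler's identity $y\cdot\nabla\rho(y)=\rho(y)$ with the inclusions $B(0,r)\subset\Omega-x_0\subset B(0,R)$, satisfies $\abs{\nabla v}\ge 1/R$ a.e., so $J_\infty(v)\le R^2\,\abs{\Omega}<\infty$. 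Existence of an actual minimizer $v^*\in\hat C$ of $J_\infty$ is granted by \cite[Theorem~2.1]{Buttazzo1995}, as mentioned in the remark preceding the theorem.

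The upper bound then follows from optimality and monotonicity: $J_M(u_M)\le J_M(v^*)\le J_\infty(v^*)=\min J_\infty$ for every $M>0$. For the matching lower bound I would extract a subsequence $u_{M_{n_k}}\to u^*$ in $\Wloc{p}$, with $\nabla u_{M_{n_k}}\to\nabla u^*$ a.e.\ by \cref{lem:J_and_Wloc}, and introduce an auxiliary parameter $\tilde M>0$. The integrand of $J_{\tilde M}$ is dominated by $\tilde M^2$, so dominated convergence yields $J_{\tilde M}(u_{M_{n_k}})\to J_{\tilde M}(u^*)$, while monotonicity gives $J_{\tilde M}(u_{M_{n_k}})\le J_{M_{n_k}}(u_{M_{n_k}})$ as soon as $M_{n_k}\ge\tilde M$. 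Combining,
\[
	J_{\tilde M}(u^*)\le\liminf_{k\to\infty} J_{M_{n_k}}(u_{M_{n_k}})\le\min J_\infty.
\]
Letting $\tilde M\to\infty$ and using the monotone convergence theorem on the left yields $J_\infty(u^*)\le\min J_\infty$, hence $u^*$ is optimal for \eqref{problem:J_infty} and the inequalities are equalities. Since every subsequence of $\seq{J_M(u_M)}_{M>0}$ has a sub-subsequence with this limit, the full sequence converges to $\min J_\infty$.

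The only real obstacle is the double limit in the lower bound: one cannot apply lower semicontinuity directly to $J_\infty$ along $\Wloc{p}$-convergent sequences, because the integrand $1/\abs{\nabla u}^2$ is not uniformly dominated and may in fact blow up on flat regions of $u_{M_{n_k}}$. The remedy is precisely the ordering above---freeze $\tilde M$ first to secure the uniform majorant $\tilde M^2$ and obtain continuity of $J_{\tilde M}$ along the sequence, and only then let $\tilde M\to\infty$ monotonically on the single limit point $u^*$.
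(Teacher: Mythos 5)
Your proof is correct, and its skeleton matches the paper's: sequential compactness of $\hat C$ in $\Wloc{p}$ (from \cref{lem:J_and_Wloc}) for accumulation points, the trivial bound $J_M\le J_\infty$ combined with optimality of $u_M$ for the upper estimate, and an explicit cone-like competitor to show $\min J_\infty<\infty$. The one genuine difference lies in the lower-semicontinuity step: the paper applies Fatou's lemma directly to the integrands $1/(M_n^{-2}+\abs{\nabla u_{M_n}}^2)$ along the a.e.\ convergent gradients, which gives $J_\infty(\hat u)\le\liminf_n J_{M_n}(u_{M_n})$ in one line, whereas you freeze an auxiliary level $\tilde M$, use the uniform majorant $\tilde M^2$ and dominated convergence to obtain $J_{\tilde M}(u_{M_{n_k}})\to J_{\tilde M}(u^*)$, exploit $J_{\tilde M}\le J_{M_{n_k}}$ for $M_{n_k}\ge\tilde M$, and only then let $\tilde M\to\infty$ by monotone convergence. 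Both are sound; your two-step ordering is a bit longer but makes the monotone-approximation ($\Gamma$-convergence-like) structure explicit, while Fatou is the shorter tool tailored to exactly this situation. Two cosmetic remarks: in the upper bound you invoke an actual minimizer $v^*$ of $J_\infty$ (legitimate, since its existence is asserted just before the theorem via the direct method), but this is not needed --- comparing with an arbitrary $u\in\hat C$ and taking the infimum, as the paper does, avoids even the appearance of circularity with the existence content of the theorem itself; and the paper gets the existence of $\lim_{M\to\infty}J_M(u_M)$ from the monotonicity of $M\mapsto\inf_{u\in\hat C}J_M(u)$, while your subsequence-of-subsequence argument achieves the same end. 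Your Minkowski-gauge competitor is equivalent to the paper's choice $u(x)=-1+\abs{x}/R$.
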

\begin{proof}
	Due to $u_M \in \hat C$ for all $M > 0$,
	\cref{lem:J_and_Wloc} implies the claimed existence of accumulation points.
	Now, for any sequence $\seq{u_{M_n}}$ with $M_n \to \infty$
	and
	$u_{M_n} \to \hat u$ in $\Wloc{p}$, we have
	(along a subsequence)
	$\nabla u_{M_n} \to \nabla \hat u$ a.e.\ in $\Omega$.
	Hence,
	Fatou's lemma implies
	\begin{equation*}
		J_\infty( \hat u )
		=
		\int_\Omega \frac{1}{\abs{\nabla \hat u}^2} \, \dx
		\le
		\liminf_{n \to \infty}
		\int_\Omega \frac{1}{M_n^{-2} + \abs{\nabla u_{M_n}}^2} \, \dx
		=
		\liminf_{n \to \infty} J_{M_n}(u_{M_n})
	\end{equation*}
	On the other hand, we trivially have
	$J_M(u) \le J_\infty(u)$
	for all $M > 0$ and $u \in \hat C$.
	Hence, the optimality of $u_{M_n}$ implies
	\begin{equation*}
		\forall u \in \hat C
		\quad
		J_\infty(\hat u)
		\le
		\liminf_{n \to \infty} J_{M_n}(u_{M_n})
		\le
		\liminf_{n \to \infty} J_{M_n}( u )
		\le
		J_\infty(u).
	\end{equation*}
	This shows that $\hat u$ is a solution to \eqref{problem:J_infty}.

	From $J_M(u)\ge J_{M'}(u)$ for $M\ge M'$,
	we get that $\inf_{u\in \hat C}J_M(u)$ is monotonically increasing in $M$.
	Hence,
	\[
		\lim_{M\to\infty} J_M(u_M)=
		\lim_{M\to\infty}\inf_{u\in \hat C} J_M(u)
		\le
		\inf_{u\in \hat C}J_\infty (u)
		=
		J_\infty(\hat u).
	\]
	
	It remains to prove
	$J_\infty(\hat u)<\infty$.
	Without loss of generality $0 \in \Int \Omega$.
	Consider $u(x)=-1+|x|/R$ where $R=\max_{x\in\Omega}\dist(x,0)$.
	Obviously, $u\in \hat C$
	and $\abs{\nabla u(x)}\equiv 1/R$ (except for $x=0$).
	Hence, $J_\infty(\hat u)\le J_\infty(u)=R^2\,\operatorname{area}(\Omega)<\infty$,
	since $\Omega$ is compact.
\end{proof}

In \cite{LokutsievskiyZelikin2},
an important subclass $E_M\subset C_M$
for the classical case $\Omega=\{x^2+y^2\le 1\}\subset\R^2$ is considered.
The subclass $E_M$ consists of functions being a convex envelope
of $\delta_\Omega$ and a convex curve 
lying in a vertical plane of symmetry
(see Section 4 in \cite{LokutsievskiyZelikin2} for details).
In \cite{LokutsievskiyZelikin2},
a family of functions $\tilde u_M\in E_M$ of special form is constructed.
Moreover, it is analytically proved that $\tilde u_M$
is a local minimum for large enough $M$ in $E_M$
w.r.t.\ a certain class of variations, see \cite[Theorem~9.1]{LokutsievskiyZelikin2}.
It is known that the resistances of analytically found $\tilde u_M\in E_M$
and numerically found optimal solution
$\hat u_M$ in $C_M$ (see \cite{Wachsmuth,LachandNumeric})
coincide up to 1\% for $M=1.5$.
In this paper, we will present a new result on optimality
of certain conical parts of the body side boundary,
which allows us investigate the question
whether $\tilde u_M\in E_M$ are optimal in $C_M$ or not.
On the first glance,
they seems to be not optimal,
since the numerical results are accurate enough
and give a slightly better values of the resistance functional.
But the following question is much more interesting:
does the family $\tilde u_M\in E_M$
is at least asymptotically optimal in $C_M$ for $J$
(see conclusion section in \cite{LokutsievskiyZelikin2}).
This question is equivalent to the following:
does the family $M^{-1}\tilde u_M$
is asymptotically optimal in $\hat C$ for $J_M$.
Recall that a family $(u_M)$
is called asymptotically optimal
for functional $J_M$ as $M\to\infty$ if
\[
	\lim_{M\to\infty}\frac{J_M(u_M)}{\inf_{u\in \hat C}J_M(u)}=1.
\]

The following proposition gives a simple way to work with asymptotically
optimal families,
it can be proved analogously to \cref{thm:limit_problem}.

\begin{prop}
\label{prop:asymp_limit}
	Let $p \in [1,\infty)$ be given and
	consider an asymptotically optimal family $u_M\in \hat C$ for $J_M$ as $M\to\infty$.
	Then there exists a sequence $M_k\to\infty$ as $k\to\infty$
	and $u_\infty\in \hat C$,
	$u_{M_k} \to u_\infty$ in $\Wloc{p}$
	and
	$u_\infty$ is optimal in $\hat C$ for limit functional $J_\infty$.
\end{prop}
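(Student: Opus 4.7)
The plan is to mirror the proof of \cref{thm:limit_problem}, replacing strict optimality of the approximating family by asymptotic optimality only at the very last step. The three ingredients I will use are: sequential compactness of $\hat C$ in $\Wloc{p}$ together with a.e.\ convergence of gradients (from \cref{lem:J_and_Wloc}); Fatou's lemma, to pass to the limit inside the integral defining $J_M$; and the identification $\inf_{u \in \hat C} J_M(u) \to \min_{u \in \hat C} J_\infty(u) < \infty$ already established in \cref{thm:limit_problem}.

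First I would apply \cref{lem:J_and_Wloc} to the sequence $\seq{u_{M_n}}$ for any choice of $M_n \to \infty$: this yields a subsequence, still written $\seq{u_{M_k}}$, such that $u_{M_k} \to u_\infty$ in $\Wloc{p}$ for some $u_\infty \in \hat C$ and $\nabla u_{M_k} \to \nabla u_\infty$ a.e.\ in $\Omega$. On this full-measure set, the integrand $(\abs{\nabla u_{M_k}}^2 + M_k^{-2})^{-1}$ converges pointwise to $\abs{\nabla u_\infty}^{-2}$, so Fatou's lemma delivers
\[
J_\infty(u_\infty) \le \liminf_{k \to \infty} J_{M_k}(u_{M_k}).
\]
Next, by asymptotic optimality of the family, $J_{M_k}(u_{M_k}) = (1 + o(1))\, \inf_{u \in \hat C} J_{M_k}(u)$, while \cref{thm:limit_problem} gives $\inf_{u \in \hat C} J_{M_k}(u) \to \min_{u \in \hat C} J_\infty(u) < \infty$. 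Hence $\lim_k J_{M_k}(u_{M_k}) = \min_{u \in \hat C} J_\infty(u)$, and combining with the Fatou bound yields $J_\infty(u_\infty) \le \min_{u \in \hat C} J_\infty(u)$, so that $u_\infty$ attains the minimum of $J_\infty$ on $\hat C$.

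I do not anticipate a genuine obstacle: the proof reuses exactly the compactness/Fatou mechanism of \cref{thm:limit_problem}. The only point requiring a bit of care is the passage from the ratio form of asymptotic optimality to a limit statement on the values $J_{M_k}(u_{M_k})$ themselves, and this is legitimate precisely because \cref{thm:limit_problem} guarantees that the denominator $\inf_{u \in \hat C} J_{M_k}(u)$ has a finite, attained limit as $k \to \infty$.
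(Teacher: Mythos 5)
Your proof is correct and follows essentially the same route the paper intends, namely repeating the compactness/Fatou mechanism of \cref{thm:limit_problem} (via \cref{lem:J_and_Wloc}) and then invoking its conclusion that $\inf_{u\in\hat C}J_{M}(u)\to\min_{u\in\hat C}J_\infty(u)<\infty$ to convert the ratio form of asymptotic optimality into $\lim_k J_{M_k}(u_{M_k})=\min_{u\in\hat C}J_\infty(u)$. This is exactly the "analogous" argument the paper has in mind, so no further comment is needed.
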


This proposition gives us a tool to check if a certain family of bodies is
asymptotically optimal.
Together with results in the next section it allows us to investigate the
family found in \cite{LokutsievskiyZelikin2}.

\section{Properties of Solutions to the Limiting Problem}
\label{sec:props}

Suppose that $u$ is a solution of the following problem
\[
J(u)=\int_{\Omega} f(\nabla u(x))\,\d x\to\min_{u\in \hat C}
\]

We give a simple proof of the following well known fact (see also~\cite[Theorem 1]{Plakhov2020ExtremePoints}).

\begin{prop}
	\label{prop:lack_of_strict_convexity}
	Let $x_0\in\Int\Omega$. Suppose that $f''(\nabla u(x_0))$ has at least 1 negative eigenvalue and $u$ is $C^2$ in a neighborhood of $x_0$.
	Then $\det u''(x_0)=0$. 
\end{prop}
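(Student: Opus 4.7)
The plan is to derive a contradiction by constructing a local perturbation of $u$ near $x_0$ that strictly decreases $J$. Assume $\det u''(x_0) \neq 0$: since $u\in\hat C$ is convex and $C^2$ near $x_0$, the Hessian $u''(x_0)$ is positive semidefinite and nondegenerate, hence strictly positive definite, say $u''(x_0) \succeq 2\mu I$ for some $\mu>0$. By continuity, on some ball $B_r(x_0) \subset \Int\Omega$ one has $u'' \succeq \mu I$, and the convexity constraint becomes inactive for $C^2$-small perturbations supported in $B_r$. The goal is to produce such a perturbation and violate the natural Legendre--Hadamard condition $f''(\nabla u(x_0)) \succeq 0$.

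Concretely, let $v$ be a unit eigenvector of $f''(\nabla u(x_0))$ with eigenvalue $-\lambda < 0$, and shrink $r$ so that $v^\top f''(\nabla u(x))\,v \leq -\lambda/2$ on $B_r(x_0)$. Fix a nonzero bump $\eta \in C^\infty_c(B_r(x_0))$ and set
\[
\varphi_k(x) := \eta(x) \sin\bigl(k\, v \cdot (x - x_0)\bigr), \qquad \varepsilon_k := c/k^2,
\]
with $c>0$ small enough that $u + \varepsilon_k \varphi_k$ has Hessian $\succeq \tfrac{\mu}{2}I$ on $B_r$; this is where the bound $\|\varphi_k''\|_\infty = O(k^2)$ forces the scaling $\varepsilon_k \sim k^{-2}$. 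For large $k$ the constraint $-1 \leq u + \varepsilon_k\varphi_k \leq 0$ is automatic; should $u(x_0) \in \{-1,0\}$ one shifts $\sin$ by $\pm 1$ to control the sign, without altering the asymptotics.

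The contradiction will follow from the Taylor expansion
\[
J(u + \varepsilon_k \varphi_k) - J(u) = \varepsilon_k \int f'(\nabla u) \cdot \nabla \varphi_k \, \dx + \frac{\varepsilon_k^2}{2} \int \nabla \varphi_k^\top f''(\nabla u) \nabla \varphi_k \, \dx + R_k.
\]
Integrating by parts turns the linear term into $-\varepsilon_k \int \operatorname{div}(f'(\nabla u))\,\varphi_k\,\dx$; since the first factor is smooth on $B_r$ and $\varphi_k$ oscillates at frequency $k$, Riemann--Lebesgue gives $O(\varepsilon_k/k) = O(k^{-3})$. The quadratic term is dominated by $\tfrac{\varepsilon_k^2 k^2}{2} \int \eta(x)^2 \cos^2(kv\cdot(x-x_0))\,v^\top f''(\nabla u(x))\,v\,\dx$; the $\cos^2$-average contributes $\tfrac12 \int \eta^2 + o(1)$, and together with $v^\top f'' v \leq -\lambda/2$ this is at most $-c_0/k^2$ for some $c_0>0$. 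The cubic remainder is $O(\varepsilon_k^3\|\nabla\varphi_k\|_\infty^3) = O(k^{-3})$. Combining, $J(u+\varepsilon_k\varphi_k) - J(u) \leq -c_0/k^2 + O(k^{-3}) < 0$ for $k$ large, contradicting optimality of $u$.

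The delicate point — and the step I expect to demand most care — is the frequency--amplitude balance $\varepsilon_k \sim k^{-2}$: small enough that the $O(k^2)$ Hessian of $\varepsilon_k\varphi_k$ cannot destroy positive definiteness of $u''$ (keeping the perturbation in $\hat C$), yet large enough that the negative second variation of order $\varepsilon_k^2 k^2 = k^{-2}$ strictly dominates both remainders of order $k^{-3}$. The whole argument rests on $u''(x_0)\succ 0$; in the degenerate case convexity-preserving perturbations are far more constrained, which is precisely why the proposition's conclusion is only $\det u''(x_0) = 0$ rather than the stronger pointwise semidefiniteness of $f''$.
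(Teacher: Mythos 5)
Your argument in the regime $-1<u(x_0)<0$ is sound and is in effect a self-contained proof of the Legendre necessary condition by oscillatory variations; the paper makes the same first move (observing that $\det u''(x_0)>0$ renders the convexity constraint inactive for $C^2$-small perturbations supported near $x_0$) but then simply cites the classical Legendre condition rather than proving it by hand, so in that case your route is a legitimate, more elementary alternative. (A cosmetic point: Riemann--Lebesgue alone gives only $o(\varepsilon_k)$ for the linear term, not $O(\varepsilon_k/k)$, unless you assume extra regularity of $\operatorname{div} f'(\nabla u)$; since $o(k^{-2})$ still loses to the gain of order $k^{-2}$ for fixed $c$, this does not hurt.)

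The genuine gap is the active-constraint case $u(x_0)=-1$, which the paper treats by a separate argument and which your fix does not handle. First, the unshifted perturbation really does leave $\hat C$ there: since $\nabla u(x_0)=0$ and $u''(x_0)\succ0$, the slack $u+1$ vanishes quadratically at $x_0$, whereas $\varepsilon_k\,\eta(x)\sin\bigl(k\,v\cdot(x-x_0)\bigr)$ is of size $\varepsilon_k k\,\abs{x-x_0}$ in the direction where the sine is negative; so for every $k$ the perturbed function dips below $-1$ at points within distance $O(1/k)$ of $x_0$ (if $\eta(x_0)>0$), no matter how small $c$ is. Second, your proposed remedy --- replacing $\sin$ by $\sin\pm1$ --- \emph{does} alter the asymptotics: the first variation then acquires the non-oscillatory contribution
\begin{equation*}
	\varepsilon_k\int f'(\nabla u)\cdot\nabla\eta\,\dx = \frac{cA}{k^2},
	\qquad A:=\int f'(\nabla u)\cdot\nabla\eta\,\dx,
\end{equation*}
which is of the same order $k^{-2}$ as your negative second-variation gain; worse, that gain is proportional to $c^2/k^2$, so for the small $c$ forced by the convexity constraint the term $cA/k^2$ dominates. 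Its sign is not controlled a priori (you cannot invoke the Euler--Lagrange equation at this stage), so when $A>0$ no contradiction is obtained. The paper's resolution of this case is different: if $x_0$ is a minimum with $\det u''(x_0)>0$, apply the interior argument at nearby points $x\ne x_0$, where $u(x)>-1$, to conclude $\det u''=0$ on a punctured neighborhood of $x_0$, contradicting $\det u''(x_0)>0$ by continuity; equivalently, in your construction one may take $\eta$ supported away from $x_0$. (The other boundary case $u(x_0)=0$ is vacuous under your contradiction hypothesis, since an interior maximum of a convex function forces $u\equiv\const$ and hence $u''\equiv0$; this is the paper's first case.) Without such an argument your proof does not cover the full statement.
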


\begin{proof}
	First, let $x_0$ be a maximum of $u$. In this case, $u\equiv\const$, since $u$ is convex and $x_0\in\Int\Omega$. 
	Second, let $x_0$ not be a minimum of $u$, i.e.\ $u(x_0)>-1$. Suppose the contrary: let $\det u''(x_0)>0$. 
	Then $u+h\in\hat C$, if $h\in C^2$, $\|h\|_{C^2}$ is small enough, and
	$\mathrm{supp}\,h$ belongs to a neighborhood of $x_0$ where $\det u''$ is separated from 0.
	Hence $u$ is a local minimum of $J$ under the described variations.
	Thereby,  in the neighborhood, $u$ must satisfy both the Euler-Lagrange equation (which is not important for us) 
	and the Legendre condition, since $u(x)\in\R^1$ (see \cite{Hadamard,Bliss}).
	The last one states that
	the Hessian form $f''(\nabla u(x_0))$ must be non-negative definite and this is a contradiction. 
	Finally, let $x_0$ be a minimum of $u$.
	Again, we assume $\det u''(x_0) > 0$.
	Then, we can apply the second part of the proof in a neighborhood of $x_0$
	and obtain
	$\det u''(x)=0$
	for all $x$ in a punctured neighborhood of $x_0$.
	This contradicts $\det u''(x_0)>0$.
\end{proof}

For the limiting problem~\eqref{problem:J_infty}, we have $f(p)=|p|^{-2}$. 
Eigenvalues of $f''(p)$ are $-2|p|^{-4}$ and $6|p|^{-4}$. 
Therefore strict convexity of $C^2$ parts is forbidden for optimal solutions.

Similar to the classical Newton's problem,
we are able to prove that solutions to the limiting problem~\eqref{problem:J_infty} cannot be radially symmetric.

\begin{thm}
	\label{thm:limiting_solution_is_not_radially_symmetric}
	Let $n=2$ and $\Omega=\{x_1^2+x_2^2\le 1\}$. Then any solution to the limiting problem~\eqref{problem:J_infty} is not radially symmetric.
\end{thm}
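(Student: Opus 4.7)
The strategy is a proof by contradiction: first identify the unique radially symmetric candidate minimizer in closed form, and then invoke \cref{prop:lack_of_strict_convexity} to rule it out. Suppose $u\in \hat C$ is a radially symmetric solution to \eqref{problem:J_infty} and write $u(x) = v(\abs{x})$ for some convex $v \colon [0,1] \to [-1,0]$; convexity of $u$ on the disc forces $v$ convex and non-decreasing on $[0,1]$, while the boundary behaviour of optimal solutions in the limiting problem noted in the introduction (following \cite[Theorem~2]{Plakhov2019ANO}) gives $v(1) = 0$. In polar coordinates the problem then reduces to minimising
\[
I(v) := 2\pi \int_0^1 \frac{r}{v'(r)^2}\,\d r
\]
over convex $v \colon [0,1]\to [-1,0]$ with $v(1) = 0$.

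The next step is to solve this one-dimensional problem in closed form. Since $v$ is non-decreasing with $v(1)=0$, the constraint $v \ge -1$ is equivalent to $\int_0^1 v'(r)\,\d r \le 1$, and because $I$ is strictly monotonically decreasing in $v'$ this inequality is binding at the optimum, $\int_0^1 v'(r)\,\d r = 1$. Setting $p = v'$, the integrand is strictly convex in $p$ on $(0,\infty)$, so the minimizer in the convex class is unique; Lagrange multipliers give the pointwise relation $-2r/p(r)^3 + \lambda = 0$, which after normalisation yields $v'(r) = \tfrac{4}{3}\,r^{1/3}$ and hence the unique candidate
\[
v(r) = r^{4/3} - 1.
\]
Note that this $v'$ is automatically non-decreasing, so the monotonicity constraint stemming from convexity of $v$ is slack.

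The final step is to derive the contradiction. The candidate $u(x) = \abs{x}^{4/3} - 1$ is of class $C^2$ on $\Omega \setminus \set{0}$; in an orthonormal radial/tangential frame at $x_0 \neq 0$ with $r_0 := \abs{x_0}$, the Hessian reads $\operatorname{diag}\bigl(v''(r_0),\, v'(r_0)/r_0\bigr) = \operatorname{diag}\bigl(\tfrac{4}{9}\, r_0^{-2/3},\, \tfrac{4}{3}\, r_0^{-2/3}\bigr)$, and in particular $\det u''(x_0) = \tfrac{16}{27}\,r_0^{-4/3} > 0$. On the other hand, $\nabla u(x_0) \neq 0$ and the spectral remark recorded right after \cref{prop:lack_of_strict_convexity} shows that $f''(\nabla u(x_0))$ has the negative eigenvalue $-2\abs{\nabla u(x_0)}^{-4}$; hence \cref{prop:lack_of_strict_convexity} applies and forces $\det u''(x_0) = 0$, a contradiction.

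The main technical hurdle will be the rigorous justification of the one-dimensional minimisation, in particular that the Lagrange-multiplier / Euler--Lagrange computation really identifies the unique minimizer within the \emph{full} convex admissible class rather than only over smooth profiles. This should follow from strict convexity of $I$ in $p = v'$ on the convex admissible set, combined with the observation that the candidate $v$ has $v'$ strictly increasing, so the monotonicity constraint on $v'$ is slack and no additional inequality multiplier is required.
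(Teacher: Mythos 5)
Your proposal is correct and follows essentially the same route as the paper: reduce to the one-dimensional radial problem, identify $\hat u(x)=\abs{x}^{4/3}-1$ as the unique radial minimizer, and contradict \cref{prop:lack_of_strict_convexity} via the negative eigenvalue $-2\abs{p}^{-4}$ of $f''(p)$. The only (cosmetic) difference is that the paper certifies global optimality of $r^{4/3}-1$ in the radial class through the Pontryagin maximum principle — integrating the pointwise inequality for the concave Hamiltonian — which is exactly the convexity/multiplier verification you flag as the remaining technical step, so that ``hurdle'' is closed by the same argument you sketch.
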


\begin{proof}
	We prove that the problem restricted to radial symmetric solutions
	is uniquely solvable
	and
	show that the solution has strictly convex smooth parts that contradicts \cref{prop:lack_of_strict_convexity}. Let $u(x_1,x_2)=z(r)$ where $r=\sqrt{x_1^2+x_2^2}$. Then problem~\eqref{problem:J_infty} becomes
	\begin{equation}
	\label{problem:radial_limit}
	\int_0^1\frac{r\,\d r}{z'^2(r)}\to\min_z,\qquad z(0)=-1,\ z(1)=0,\ z\text{ is convex and monotone}.
	\end{equation}
	This problem is similar to the classical Newton's problem and can be solved similarly. So let us find a solution in the class of monotonic functions, and show that it is convex and gives absolute minimum to problem~\eqref{problem:radial_limit}. So,
	\[
	\int_0^1\frac{r\,\d r}{w^2(r)}\to\min_z,\qquad z(0)=-1,\ z(1)=0,\ z'(r)=w(r)\ge 0.
	\]
	To apply the Pontryagin maximum principle (PMP), we define the Pontryagin function
	\[
	H=-\frac{\lambda_0r}{w^2} + q w,
	\]
	where $q=q(r)$ is conjugate to $z$ and $\lambda_0\in\R$ is non-negative. Hence $q'=-H_z=0$ and $q(r)\equiv q_0=\const$.
	
	Suppose that $\lambda_0=0$, then the optimal $w\ge 0$ maximizes $q_0w$ due to PMP. Hence $q_0<0$ (the case $\lambda_0=q_0=0$ is forbidden by PMP) and $w(r)=0$ for all $r\in[0,1]$, which contradicts to $z(1)-z(0)=1$. 
	
	So we put $\lambda_0=1/2$ and the PMP gives the following finite dimensional problem
	\begin{equation}
	\label{eq:maximum_principle_for_limit_radial_problem}
	H=-\frac{r}{2w^2} + q_0 w \to \max_{w\ge 0}.
	\end{equation}
	This function is concave for $w\in(0,\infty)$ and goes to $-\infty$ as $w\to+0$. If $q_0\ge 0$ there is no maximum. Hence $q_0< 0$ and $H$ goes to $-\infty$ also as $w\to+\infty$. Hence $H$ achieves its global maximum at the point where $H_w=0$,
	i.e.,
	\[
	H_w=\frac{r}{w^3}+q_0=0
	\quad\Rightarrow\quad
	w=-(q_0r)^{1/3}
	.
	\]
	Therefore $z=ar^{4/3}-x_0$ where $a=-\frac34(q_0)^{1/3}$. Since $z(0)=-1$ and $z(1)=0$, we have a (convex and monotone) candidate $\hat z=r^{4/3}-1$.
	
	Let us now prove, that $\hat z=r^{4/3}-1$ is the unique solution to problem~\eqref{problem:radial_limit}. Indeed, let $z(r)$ be an arbitrary convex monotone function with $z(0)=-1$ and $z(1)=0$. Since $\hat w=\hat z'$ is global maximum of $H$, and $z'(r)\ge 0$, then
	\[
	-\frac{r}{2z'^2} + q_0 z'\le -\frac{r}{2\hat z'^2} + q_0 \hat z'.
	\]
	Integrating this inequality for $r\in[0,1]$ we obtain
	\[
	-\frac12 \int_0^1\frac{r\,\d r}{2z'^2} + q_0(z(1)-z(0)) \le -\frac12 \int_0^1\frac{r\,\d r}{2\hat z'^2} + q_0(\hat z(1)-\hat z(0))
	\]
	or
	\[
	\int_0^1\frac{r\,\d r}{2z'^2} \ge \int_0^1\frac{r\,\d r}{2\hat z'^2}.
	\]
	
	So we have proved that $\hat u(x_1,x_2) = (x_1^2+x_2^2)^{2/3}-1$ is
	the unique global minimum of $J_\infty$ in the $\hat C$ subclass of radially symmetric bodies. 
	It remains to compute $f''(p)$ for $f(p)=|p|^{-2}$ due to \cref{prop:lack_of_strict_convexity}.
	It is easy to check, that $f''(p)$ has eigenvalues $-2|p|^{-4}$ and $6|p|^{-4}$. 
	Hence, using \cref{prop:lack_of_strict_convexity} we obtain that the unique global
	minimum $\hat u$ of $J_\infty$ is the $\hat C$ subclass of radially symmetric
	bodies cannot be solution to the limiting problem~\eqref{problem:J_infty}.
\end{proof}

We note that the objective value of the radial solution is given by
\begin{equation*}
2 \pi \int_0^1 \frac{r}{z'(r)^2} \, \d r
=
\frac98 \pi \int_0^1 r^{1/3} \, \d r
=
\frac{27}{32} \pi \approx 2.651.
\end{equation*}
A simple screwdriver-shape given by the convex hull of
$\Omega \times \set{0}$ and the line segment
joining $(\pm a, 0, -1)$
with $a \approx 0.55527$
yields the better value of
approx.~$2.145$.

In the case of finite height,
solutions satisfy
$\abs{\nabla u(x,y)} \in\{0\}\cup[1,+\infty)$ 
for a.e.\ $(x,y)\in\Omega$.
This seems not to be true for the solution of the limiting problem.
Indeed, we observed gradients of magnitude approx.\ $0.9863$
in numerical simulations.
Detailed results of the numerical computations might appear elsewhere.

\section{Non-optimality of Conical parts}
\label{sec:nonoptimality}
In this section, we will prove a non-optimality result for certain conical parts
included in the boundary of the body
in the classical situation of a circular base
$\Omega=\set{(x,y) \in \R^2 \given x^2+y^2\le 1}$.
In other words, we will prove that the boundary of an optimal body cannot have
certain conical parts.

We will write $\delta=M^{-2}$ for short.
Hence, $\delta\ge 0$, and the case $\delta=0$ corresponds to $M=\infty$.
Therefore,
\[
	J_M(u) = \int_\Omega \frac{1}{\abs{\nabla u(x)}^2+\delta} \, \dx,
\]
and the function $u$ is normalized, i.e.,
$-1\le u\le 0$.

We start by considering a simple situation,
in which the entire body is just an oblique circular cone.
The base is given by $\Omega \times \set{0}$
and the apex is given by
the point
$P_0=(x_0,y_0,-1)$ with $(x_0, y_0) \in \Int\Omega$.
We take a different point
$(x_1, y_1) \in \Int\Omega$.
We further take some height $M_1 > 0$, such that
$(x_1, y_1, -M_1)$
lies exactly on the boundary of the cone.
Now, for $\varepsilon > 0$ we consider the perturbed point
$P_1 = (x_1, y_1, -m)$ with $m = M_1 + \varepsilon^2 \, (1-M_1)$.
The perturbed body is given by the convex hull of
the base $\Omega \times \set{0}$
and the points $P_0$ and $P_1$,
see \cref{fig:visulization_perturbation}.
\begin{figure}[ht]
	\centering
	\begin{tikzpicture}[scale=2.0,
			mydot/.style={draw,fill,circle,inner sep=0pt,minimum size=2pt}
		]
		\draw (0,0) circle (1);
		\newcommand\xo{.3}
		\newcommand\yo{.2}
		\newcommand\Mi{.5}
		\newcommand\eps{0.5}
		\renewcommand\xi{(1 - \Mi + \xo * \Mi)}
		\newcommand\yi{(\Mi * \yo)}
		\newcommand\xii{(1-\eps*\eps*\xo)/(1-\eps*\eps)}
		\newcommand\yii{(0-\eps*\eps*\yo)/(1-\eps*\eps)}
		\newcommand\rii{sqrt( (1 - 2*\eps^2 * \xo + \eps^4*(\xo^2 + \yo^2)) / ( ( 1 - \eps^2)^2 ) )};
		\newcommand\thetaii{(-atan( (\eps^2*\yo) / (1-\eps^2*\xo) ))};
		\newcommand\phip{(\thetaii + acos(1/\rii))}
		\newcommand\phim{(\thetaii - acos(1/\rii))}
		\node[mydot](O) at (0,0){};
		\node[left=.1 of O]{$(0,0)$};
		\node[mydot](xo) at ({\xo}, {\yo}){};
		\node[mydot](xi) at ({\xi}, {\yi}){};
		\node[mydot](phip) at ({cos(\phip)}, {sin(\phip)}){};
		\node[mydot](phim) at ({cos(\phim)}, {sin(\phim)}){};
		\draw (xo) -- (phip) -- (xi) -- (xo);
		\draw (xo) -- (phim) -- (xi);

		\node[mydot](xii) at ({\xii}, {\yii}){};
		\draw[gray] (xi) -- (xii) -- (phip);
		\draw[gray] (xii) -- (phim);

		\node[left=.1 of xo]{$(x_0,y_0)$};
		\node[right=.1 of xi]{$(x_1,y_1)$};
		\node[right=.1 of xii]{$(x_2,y_2)$};
		\node[right=.1 of phip]{$\varphi_+$};
		\node[right=.1 of phim]{$\varphi_-$};
	\end{tikzpicture}
	\hfill%
	\includegraphics[width=.5\textwidth]{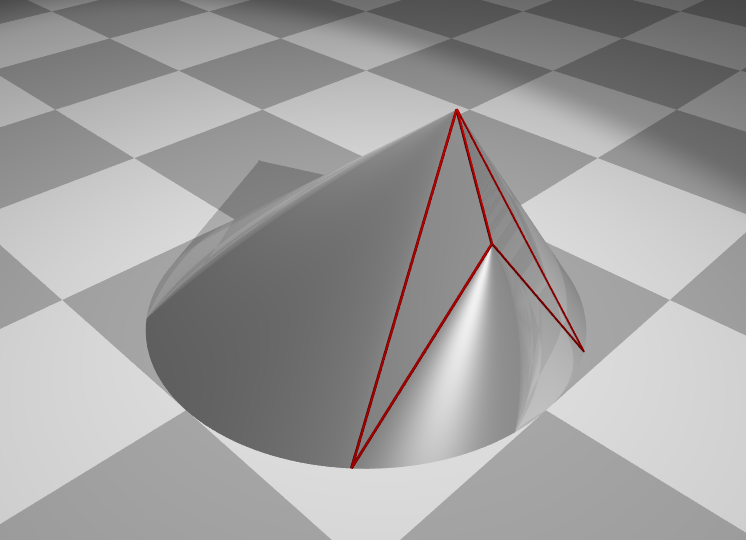}
	\caption{Visualization of the perturbed surface.}
	\label{fig:visulization_perturbation}
\end{figure}
In the following, we derive an expansion formula
of the resistance of the perturbed body
in terms of the parameter $\varepsilon$.
Note that the original cone corresponds to $\varepsilon = 0$.

Since the resistance does not change under rotations and reflections,
we can assume without loss of generality,
that the line through the points $(x_0,y_0)$ and $(x_1,y_1)$
also contains the point $(1,0)$
and that $y_0\ge0$.

The line $P_0P_1$ intersects the horizontal plane $\{z=0\}$ at the point
\[
	\begin{pmatrix}
		x_2\\y_2
	\end{pmatrix}
	=
\frac{m}{m-1}
\begin{pmatrix}
x_0\\y_0\\
\end{pmatrix}
+\frac{1}{1-m}
\begin{pmatrix}
x_1\\y_1\\
\end{pmatrix}
.
\]
The perturbed body can be described by
the four parameters $x_0$, $y_0$, $M_1 \in (0,1)$ and $\varepsilon>0$,
since
the point $P_1$ is given by
\begin{equation*}
	x_1=1-M_1+x_0M_1
	\quad\text{and}\quad
	y_1=M_1y_0.
\end{equation*}
Plugging this into the above equation, we find
\[
	\begin{pmatrix}
		x_2\\y_2\\
	\end{pmatrix}
	=
	\frac{1}{1-\varepsilon^2}
	\begin{pmatrix}
		1-\varepsilon^2 x_0\\
		-\varepsilon^2 y_0\\
	\end{pmatrix}
	.
\]
Next,
we write the point $P_2$ in polar coordinates,
i.e.,
$(x_2, y_2) = r_2 \, (\cos\theta_2, \sin\theta_2)$
with
\[
	r_2^2
	=
	\frac{1-2\varepsilon^2x_0 + \varepsilon^4(x_0^2+y_0^2)}{(1-\varepsilon^2)^2}
	\quad\mbox{and}\quad
	\theta_2=-\arctan\frac{\varepsilon^2 y_0}{1-\varepsilon^2 x_0}
	,
\]
where we used $y_2 \le 0$.
We compute some parameters to describe the structure of the perturbed body.
The circle $\partial\Omega$ contains
two important points $(\cos\varphi_\pm,\sin\varphi_\pm)$,
where $\pm\varphi_\pm>0$ and $\varphi_\pm\to0$ as $\varepsilon\to +0$.
These are the tangent points of the tangent lines to the unit disc passing through $(x_2,y_2)$.
The lateral boundary of the body consists of
the following parts:
\begin{enumerate}
	\item
		A big conic surface with
		apex $(x_0,y_0,-1)$
		and boundary arc $(\cos\varphi,\sin\varphi,0)$
		for $\varphi\in[\varphi_+, \varphi_-+2\pi]$.
		Let us compute the total resistance of this surface.
		We parametrize this part of the boundary
		via
		\begin{equation*}
			\begin{pmatrix}
				x \\ y \\ z
			\end{pmatrix}
			=
			\begin{pmatrix}
				(1-\lambda)x_0+\lambda\cos\varphi
				\\
				(1-\lambda)y_0+\lambda\sin\varphi
				\\
				-(1-\lambda)
			\end{pmatrix}
			,
			\qquad
			\lambda \in [0,1], \varphi \in [\varphi_+, \varphi_- + 2 \pi].
		\end{equation*}
		Note that a normal vector of this surface is given by
		\begin{equation*}
			n =
			\begin{pmatrix}
				\cos\varphi & \sin\varphi & x_0\cos\varphi+y_0\sin\varphi-1
			\end{pmatrix}
			^\top,
		\end{equation*}
		hence we have
		$|\nabla u(x,y)|=1/(1-x_0\cos\varphi-y_0\sin\varphi)$,
		where $(x,y)$ is linked with $(\lambda, \varphi)$
		via the above parametrization.
		For the area of the surface element, we get
		\[
			\d x\wedge \d y
			=
			\lambda\,(1-x_0\cos\varphi-y_0\sin\varphi)\,\d\lambda\wedge \d\varphi.
		\]
		Hence,
		the total resistance is given by
		\begin{align*}
			R_0
			&=
			\int_0^1\lambda\,\d\lambda\left(\int_0^{2\pi} - \int_{\varphi_-}^{\varphi_+}\right)
			\frac{1-x_0\cos\varphi-y_0\sin\varphi}{(1-x_0\cos\varphi-y_0\sin\varphi)^{-2} + \delta} \,\d\varphi \\
			&=
			\frac12\left(\int_0^{2\pi} - \int_{\varphi_-}^{\varphi_+}\right)\frac{(1-x_0\cos\varphi-y_0\sin\varphi)^3}{1+\delta(1-x_0\cos\varphi-y_0\sin\varphi)^2}\,\d\varphi.
		\end{align*}
	
	\item
		A small conic surface consisting of the apex $(x_1,y_1,-m)$
		and the boundary arc $(\cos\varphi,\sin\varphi,0)$
		for $\varphi\in[\varphi_-, \varphi_+]$. Similarly,
		we arrive at
		\[
			R_1
			=
			\frac12\int_{\varphi_-}^{\varphi_+}\frac{(1-x_1\cos\varphi-y_1\sin\varphi)^3}{m^2 + \delta(1-x_1\cos\varphi-y_1\sin\varphi)^2}\,\d\varphi
			.
		\]
	\item
		Two triangles with vertices $(x_0,y_0,-1)$, $(x_1,y_1,-m)$
		and $(\cos\varphi_\pm,\sin\varphi_\pm,0)$.
		On these triangles we have
		$\nabla u_{\pm}=1/(1-x_0\cos\varphi_\pm-y_0\sin\varphi_\pm)$.
		The areas of
		their projections onto the plane $\{z=0\}$ are
	\[
		S_\pm = \pm\frac12
		\det\begin{bmatrix}
				\cos\varphi_\pm-x_0 & \cos\varphi_\pm-x_1 \\
				\sin\varphi_\pm-y_0 & \sin\varphi_\pm-y_1 
		\end{bmatrix}
		.
	\]
\end{enumerate}
Hence, the total resistance of the perturbed body
is given by
the expression
\[
	\mathfrak{R}(\varepsilon) = R_0 + R_1 + \frac{S_+}{\abs{\nabla u_+}^2 + \delta} + \frac{S_-}{\abs{\nabla u_-}^2 + \delta}
	.
\]
In what follows,
we will derive
an asymptotic expansion of $R$
as $\varepsilon \searrow 0$.
Note that
the resistance of the unperturbed
body is given by
\begin{equation*}
	\mathfrak{R}(0)
	=
	R_0(0)
	=
	\frac12 \int_0^{2\pi} \frac{(1-x_0\cos\varphi-y_0\sin\varphi)^3}{1+\delta(1-x_0\cos\varphi-y_0\sin\varphi)^2}\,\d\varphi.
\end{equation*}

It is easy to see that
\begin{equation*}
	\varphi_\pm
	= \theta_2 \pm \arccos\frac{1}{r_2}
	=
	-\arctan\frac{\varepsilon^2 y_0}{1-\varepsilon^2 x_0} \pm \arccos\frac{1-\varepsilon^2}{\sqrt{1-2\varepsilon^2x_0 + \varepsilon^4(x_0^2+y_0^2)}}
	.
\end{equation*}
This right-hand side cannot be used to obtain
an expansion of $\varphi_\pm$,
since the argument of $\arccos$ goes to 1
as $\varepsilon \searrow 0$.
Nonetheless,
by using the addition theorems for cosine and sine,
a straightforward computation gives
\[
	\cos\varphi_\pm
	=
	\frac{(1-x_0\varepsilon^2)(1-\varepsilon^2) \pm y_0\varepsilon^3\sqrt{2(1-x_0)-(1-x_0^2-y_0^2)\varepsilon^2}}{1-2\varepsilon^2x_0 + \varepsilon^4(x_0^2+y_0^2)},
\]
\[
	\sin\varphi_\pm
	=
	\frac{\pm(1-x_0\varepsilon^2)\varepsilon\sqrt{2(1-x_0)-(1-x_0^2-y_0^2)\varepsilon^2} - y_0(1-\varepsilon^2)\varepsilon^2}{1-2\varepsilon^2x_0 + \varepsilon^4(x_0^2+y_0^2)}.
\]
\noindent
Using the last formula, we see that $\pm\varphi_\pm = \varepsilon \, \sqrt{2 \, (1 - x_0)} + O(\varepsilon^2)$ as $\varepsilon\to +0$.
Note that $\varphi_\pm$ were initially defined for $\varepsilon\ge 0$.
However,
they are analytic functions of $\varepsilon\ge 0$ as $\arcsin$ is analytic.
Hence, we are able to extend their domains for $\varepsilon<0$ by analyticity.
Both $\varphi_+$ and $\varphi_-$ become analytic functions of $\varepsilon$
around $0$.
Moreover, $\varphi_+(-\varepsilon)=\varphi_-(\varepsilon)$

First, let us compute expansions for the integrals
appearing in $R_0$ and $R_1$.
Using the Leibniz integral rule,
we arrive at
\begin{equation}
	\frac12\int_{\varphi_-(\varepsilon)}^{\varphi_+(\varepsilon)}\frac{(1-x_0\cos\varphi-y_0\sin\varphi)^3}{1+\delta(1-x_0\cos\varphi-y_0\sin\varphi)^2}\,\d\varphi
	=
	\frac{\sqrt{2} (1-x_0)^{7/2}}{1+\delta(1-x_0)^2} \varepsilon + O(\varepsilon^2)
	.
\label{eq:original_resistance_series}
\end{equation}
Similarly, using $m=M_1+\varepsilon^2(1-M_1)$,
the following expansion can be computed by converting the fraction under the integral into a Taylor series
\begin{equation}
\label{eq:expansion_second_integral}
\begin{aligned}
	&\frac12\int_{\varphi_-(\varepsilon)}^{\varphi_+(\varepsilon)}\frac{(1-x_1\cos\varphi-y_1\sin\varphi)^3}{m^2 + \delta(1-x_1\cos\varphi-y_1\sin\varphi)^2}\,\d\varphi
	\\
	&\qquad=
	\frac12\int_{\varphi_-(\varepsilon)}^{\varphi_+(\varepsilon)}\frac{(1-x_1\cos\varphi-y_1\sin\varphi)^3}{M_1^2 + \delta(1-x_1\cos\varphi-y_1\sin\varphi)^2}\,\d\varphi+O(\varepsilon^2) 
	\\
	&\qquad=
	\frac{\sqrt2(1-x_0)^{1/2}(1-x_1)^3}{M_1^2+\delta(1-x_1)^2}+O(\varepsilon^2)
	=
	\frac{M_1 \sqrt{2} (1-x_0)^{7/2}}{1+\delta(1-x_0)^2}\varepsilon + O(\varepsilon^2)
	.
\end{aligned}
\end{equation}
In the last step, we used $1-x_1=M_1(1-x_0)$.
Moreover,
both integrals are odd analytic function of $\varepsilon$,
since $\varphi_+(-\varepsilon)=\varphi_-(\varepsilon)$
and the integrand in \eqref{eq:expansion_second_integral} is an even function w.r.t.\ $\varepsilon$.
Hence, the remainder terms
in \eqref{eq:original_resistance_series} and \eqref{eq:expansion_second_integral}
are actually $O(\varepsilon^3)$.

Second, we consider the triangles.
Using again $\varphi_+(-\varepsilon)=\varphi_-(\varepsilon)$,
we have $\nabla u_+(-\varepsilon)=\nabla u_-(\varepsilon)$
and $S_+(-\varepsilon)=-S_-(\varepsilon)$.
Hence, $\frac{S_+}{\abs{\nabla u_+}^2+\delta}+\frac{S_-}{\abs{\nabla u_-}^2+\delta}$
is an odd function of $\varepsilon$.
To expand $\nabla u_{\pm}=1/(1-x_0\cos\varphi_\pm-y_0\sin\varphi_\pm)$, we use
\begin{align}
&1-x_0\cos\varphi_\pm - y_0\sin\varphi_\pm
\nonumber
\\&\qquad= 
\frac{1-x_0\mp y_0\varepsilon\sqrt{2(1-x_0)-(1-x_0^2-y_0^2)\varepsilon^2}+(x_0^2+y_0^2-x_0)\varepsilon^2}
{1-2\varepsilon^2x_0 + \varepsilon^4(x_0^2+y_0^2)}
\nonumber
\\&\qquad= 
1 - x_0
\mp y_0 \sqrt{2(1-x_0)}
+
O(\varepsilon^2)
\label{eq:triangles_gradients_series}
\end{align}
and
\begin{equation}
	S_\pm
	=
	\frac12 (1 - M_1)(1 - x_0) \sqrt{2(1-x_0)} \, \varepsilon
	+
	O(\varepsilon^2)
	.
\label{eq:trangles_area_series}
\end{equation}
Thus,
\[
	\frac{S_{\pm}}{\abs{\nabla u_\pm}^2+\delta} =\frac{(1-M_1) (1-x_0)^{7/2}}{\sqrt{2}(\delta  (1 - x_0)^2+1)}\varepsilon + O(\varepsilon^2),
\]
\noindent
and
\begin{equation}
	\frac{S_+}{\abs{\nabla u_+}^2+\delta}+\frac{S_-}{\abs{\nabla u_-}^2+\delta}
	=
	\frac{\sqrt{2}(1-M_1) (1-x_0)^{7/2}}{\delta  (1 - x_0)^2+1}\varepsilon + O(\varepsilon^3).
\label{eq:triangles_resistance_series}
\end{equation}
By combining \eqref{eq:original_resistance_series}, \eqref{eq:expansion_second_integral} and \eqref{eq:triangles_resistance_series},
we have
\[
	\mathfrak{R}(\varepsilon) - \mathfrak{R}(0)
	=
	O(\varepsilon^3).
\]

Hence, a first-order Taylor expansion of $\mathfrak{R}$
does not yield enough information
and we have to use a higher order
Taylor expansion.
As we mentioned, 
$\mathfrak{R}(\varepsilon)$ is odd analytic in $\varepsilon$.
Thus, also the second-order term vanishes
and the third-order term
can be computed in a similar way by
expanding
\eqref{eq:original_resistance_series}--\eqref{eq:triangles_resistance_series} up to the $\varepsilon^3$ terms.
We arrive at
\begin{align*}
	&\frac{\mathfrak{R}(\varepsilon) - \mathfrak{R}(0)}{(1-x_0)^{5/2}}
	\\&\qquad=
	\frac{4(1-M_1)\sqrt{2}}{3 (1+\delta(1-x_0)^2)^3}\left[3y_0^2-(1-x_0)^2-\delta(1-x_0)^2((1-x_0)^2+y_0^2)\right]\varepsilon ^3
	\\&\qquad\qquad+O(\varepsilon^5).
\end{align*}

Thereby, since $\varepsilon>0$, we obtain that the sign of the variation of the resistance coincides with the sign of the expression
\[
	3y_0^2-(1-x_0)^2-\delta(1-x_0)^2((1-x_0)^2+y_0^2),
\]
in case that this expression is not zero.
It is interesting to note that the parameter $M_1$
does not appear.
Recall that we were assuming $x_2=1$ and $y_2=0$.
For an arbitrary case,
we must rotate the body in such a way
that the point $(x_2,y_2)$ will coincide with $(1,0)$. 

\begin{thm}
	\label{thm:nonoptimality2}
	Let $u\in \hat C$ and $\delta\ge 0$.
	Suppose that $u$ contains a conical part made up by
	the convex hull of
	a vertex $(x_0,y_0,-z_0)$
	($z_0>0$ and $(x_0,y_0)=(r_0\cos\varphi_0,r_0\sin\varphi_0)$ with $0\le r_0<1$)
	and an arc $(\cos\varphi,\sin\varphi,0) \in \partial\Omega$
	for $\varphi\in[\alpha,\beta]$ with $\alpha < \beta$.
	If there exists $\varphi\in[\alpha, \beta]$ such that
	\begin{equation}
		\frac{3r_0^2\sin^2(\varphi-\varphi_0)-\bracks{1-r_0\cos(\varphi-\varphi_0)}^2}
		{\bracks{1-r_0\cos(\varphi-\varphi_0)}^2\bracks{1+r_0^2-2r_0\cos(\varphi-\varphi_0)}}
		<\delta z_0^{-2}.
	\label{eq:criterion_2}
	\end{equation}
	
	\noindent Then $u$ is not optimal for $J_M$
	with $M=\delta^{-1/2}$ for $\delta>0$ and $M=\infty$ for $\delta=0$

\end{thm}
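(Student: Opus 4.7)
My plan is to reduce the statement to the local perturbation constructed and analyzed in Section~\ref{sec:nonoptimality}. Fix a $\varphi\in[\alpha,\beta]$ witnessing \eqref{eq:criterion_2}; by continuity of the left-hand side in $\varphi$ and the strictness of the inequality, we may in fact take $\varphi\in(\alpha,\beta)$. Since $J_M$ and $\hat C$ are invariant under rotations and reflections of $\Omega$, I rotate so that $(\cos\varphi,\sin\varphi)=(1,0)$ and, if necessary, reflect in the $x$-axis so that $y_0\ge 0$. The vertex then reads $(x_0,y_0,-z_0)$ with $x_0=r_0\cos(\varphi-\varphi_0)$ and $y_0^2=r_0^2\sin^2(\varphi-\varphi_0)$, which matches the setup of Section~\ref{sec:nonoptimality} except that the apex depth is $z_0$ instead of $1$.

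Next I apply verbatim the geometric perturbation of Section~\ref{sec:nonoptimality}: for $M_1\in(0,1)$ and small $\varepsilon>0$, I set $(x_1,y_1)=M_1(x_0,y_0)+(1-M_1)(1,0)$ and push the corresponding lateral-surface point from depth $M_1 z_0$ down to $P_1=(x_1,y_1,-z_0 m)$ with $m=M_1+\varepsilon^2(1-M_1)$. I define $\tilde u$ so that its hypograph is the convex hull of the hypograph of $u$ together with $P_1$. The key observation is locality: since $\varphi_\pm\to 0$ as $\varepsilon\searrow 0$, for small enough $\varepsilon$ the two tangent points $(\cos\varphi_\pm,\sin\varphi_\pm,0)$ of the ``horizon'' of $P_1$ inside $\{u\le z\le 0\}$ lie in $(\alpha,\beta)$; together with $P_0$ they cut out exactly the same big-cone/small-cone/two-triangles structure as in Section~\ref{sec:nonoptimality}, and $\tilde u$ coincides with $u$ outside this neighborhood of the conical part. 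Admissibility $\tilde u\in\hat C$ follows from $z_0 m\le 1$ (ensured by $z_0\le 1$).

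The resistance change $\mathfrak R(\varepsilon)-\mathfrak R(0)$ is then a copy of the computation in Section~\ref{sec:nonoptimality}, the only modification being the apex depth. A direct normal-vector calculation yields $|\nabla u|=z_0/(1-x_0\cos\varphi-y_0\sin\varphi)$ on the big cone, and the analogous scaled formulas on the small cone and on the two triangles. Every denominator $1+\delta(\cdot)^2$ in \eqref{eq:original_resistance_series}--\eqref{eq:triangles_resistance_series} is therefore replaced by $z_0^2+\delta(\cdot)^2=z_0^2\bigl(1+\tilde\delta(\cdot)^2\bigr)$ with $\tilde\delta:=\delta z_0^{-2}$, accompanied by an overall positive factor $z_0^{-2}$ which does not affect signs. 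The leading $\varepsilon^3$-coefficient hence has the sign of $3y_0^2-(1-x_0)^2-\tilde\delta(1-x_0)^2\bigl((1-x_0)^2+y_0^2\bigr)$; substituting the polar expressions for $x_0,y_0$ and using $(1-x_0)^2+y_0^2=1+r_0^2-2r_0\cos(\varphi-\varphi_0)$ rewrites the inequality ``bracket $<0$'' exactly as \eqref{eq:criterion_2}. Thus $\mathfrak R(\varepsilon)<\mathfrak R(0)$ for all sufficiently small $\varepsilon>0$, contradicting optimality of $u$.

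I expect the main technical obstacle to be the locality step rather than any analytic subtlety: one must verify carefully that the tangent points $\varphi_\pm$ indeed stay inside $(\alpha,\beta)$, that no remote feature of $u$ contributes to the horizon of $P_1$ for small $\varepsilon$, and that $\tilde u$ is genuinely convex, closed and bounded below by $-1$. The remaining work, namely the $z_0$-rescaling of the gradients and the trigonometric rewriting into \eqref{eq:criterion_2}, is routine.
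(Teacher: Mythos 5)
Your proposal is correct and follows essentially the same route as the paper: rotate so the chosen $\varphi$ sits at $(1,0)$ and reduce to the explicit perturbation of Section~\ref{sec:nonoptimality}, with the apex depth absorbed as $\tilde\delta=\delta z_0^{-2}$ (the paper does this by rescaling $u$ itself via $\tilde u = u(\cdot)/z_0$ and the identity $J_M(u)=z_0^{-2}J_{z_0M}(\tilde u)$, which is equivalent to your rescaling of the denominators), and your explicit locality/admissibility discussion is in fact slightly more careful than the paper's one-line invocation of the variation. Only a terminological slip: the perturbed function should be defined through the convex hull of the \emph{epigraph} of $u$ with $P_1$ (equivalently, the convex hull of the body and $P_1$), not the hypograph.
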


\begin{proof}
	The left-hand side of \eqref{eq:criterion_2}
	is continuous w.r.t.\ $\varphi$.
	Thus,
	w.l.o.g., we suppose $\varphi\in(\alpha,\beta)$.
	In order to apply the above arguments,
	we rotate and rescale the function $u$ via
	$\tilde u(x,y)=u(x\cos\varphi+y\sin\varphi,-x\sin\varphi +y\cos\varphi)/z_0$.
	Then, $J_M(u) = z_0^{-2}J_{z_0M}(\tilde u)$.
	The function $\tilde u$ contains a conical part
	made up by the apex $(X_0,Y_0,-1)$ with
	\begin{align*}
		X_0
		&=
		\phantom{-{}}x_0\cos\varphi + y_0\sin\varphi=r_0\cos(\varphi-\varphi_0);
		\\
		Y_0
		&=
		-x_0\sin\varphi + y_0\cos\varphi=r_0\sin(\varphi-\varphi_0);
	\end{align*}
	and an arc on the unit circle
	$\partial\Omega \times \{0\}$ containing point $(1,0,0)$ in its interior.
	Hence, applying the variation described in the beginning of the present section to $\tilde u$,
	we obtain
	that the change of the cost functional $J_{z_0M}(\tilde u)$ has the same sign as
	\[
		3Y_0^2-(1-X_0)^2-\tilde\delta(1-X_0)^2((1-X_0)^2+Y_0^2)
	\]
	where $\tilde\delta=(Mz_0)^{-2} = \delta \, z_0^{-2}$.
	Due to \eqref{eq:criterion_2},
	the variation has a negative sign.
	Hence, $\tilde u$ is not optimal for $J_{z_0 \, M}$
	and, consequently, $u$ is not optimal for $J_M$.
\end{proof}

We denote the left-hand side of inequality~\eqref{eq:criterion_2}
by $Z_0(r_0, \Delta\varphi)$, where $\Delta\varphi=\varphi-\varphi_0$.
In case $\delta = 0$,
non-optimality occurs if $Z_0(r_0, \Delta\varphi) < 0$.
In case $\delta > 0$,
the condition \eqref{eq:criterion_2} is equivalent to
\begin{equation*}
	Mz_0 <
	\begin{cases}
		\infty & \text{if } Z_0(r_0, \Delta\varphi) \le 0 \\
		Z_0(r_0, \Delta\varphi)^{-1/2} & \text{if } Z_0(r_0, \Delta\varphi) > 0.
	\end{cases}
\end{equation*}
In \cref{fig:visualize},
we plotted some level sets of
$Z_0(r_0, \Delta\varphi)^{-1/2}$
and
the level set 
$Z_0(r_0, \Delta\varphi) = 0$
(labeled with $\infty$)
in the polar coordinates $(r_0, \Delta\varphi)$.
\begin{figure}[ht]
	\centering
	\begin{tikzpicture}[scale=5]
		\tikzset{mylabel/.style  args={at #1 with #2}{
				postaction={decorate,
					decoration={
						markings,
						mark= at position #1
						with { \node [fill=white,draw=black] {#2};};
		} } } }
		\draw (0,0) circle (1);
		\begin{scope}
			\clip (0,0) circle (1);
			\def\z{0.25};
			\draw [thick,  domain=.6:1, samples=41, mylabel=at 0.4 with {$\z$}]
			plot ({\x}, {(1-\x)*sqrt((1/(\z*\z) *(\x-1)^2+1)/(3-1/(\z*\z)*(\x-1)^2))} );
			\draw [thick,  domain=.6:1, samples=41, mylabel=at 0.4 with {$\z$}]
			plot ({\x}, {-(1-\x)*sqrt((1/(\z*\z) *(\x-1)^2+1)/(3-1/(\z*\z)*(\x-1)^2))} );
			\def\z{0.5};
			\draw [thick,  domain=.3:1, samples=41, mylabel=at 0.4 with {$\z$}]
			plot ({\x}, {(1-\x)*sqrt((1/(\z*\z) *(\x-1)^2+1)/(3-1/(\z*\z)*(\x-1)^2))} );
			\draw [thick,  domain=.3:1, samples=41, mylabel=at 0.4 with {$\z$}]
			plot ({\x}, {-(1-\x)*sqrt((1/(\z*\z) *(\x-1)^2+1)/(3-1/(\z*\z)*(\x-1)^2))} );
			\def\z{1.0};
			\draw [thick,  domain=.0:1, samples=41, mylabel=at 0.3 with {$\z$}]
			plot ({\x}, {(1-\x)*sqrt((1/(\z*\z) *(\x-1)^2+1)/(3-1/(\z*\z)*(\x-1)^2))} );
			\draw [thick,  domain=.0:1, samples=41, mylabel=at 0.3 with {$\z$}]
			plot ({\x}, {-(1-\x)*sqrt((1/(\z*\z) *(\x-1)^2+1)/(3-1/(\z*\z)*(\x-1)^2))} );
			\draw [thick,  domain=-1.0:1, samples=41, mylabel=at 0.5 with {$\infty$}] plot ({\x}, {(1-\x)*sqrt((1)/(3))} ) ;
			\draw [thick,  domain=-1.0:1, samples=41, mylabel=at 0.5 with {$\infty$}] plot ({\x}, {-(1-\x)*sqrt((1)/(3))} ) ;
		\end{scope}
	\end{tikzpicture}
	\caption{
		This shows some level sets
		of the function $Z_0(r_0, \Delta\varphi)^{-1/2}$, see \eqref{eq:criterion_2},
		in
		the polar coordinate system $(r_0, \Delta\varphi)$.
	}
	\label{fig:visualize}
\end{figure}

\cref{thm:nonoptimality2} applies to the rescaled version of Newton's problem.
For later reference, we also give a formulation which can be directly applied to
the original problem \eqref{problem:J}.
\begin{corollary}
	\label{cor:nonopt}
	Let $M \ge 0$ and $u\in C_M$ be given.
	Suppose that $u$ contains a conical part made up by
	the convex hull of
	a vertex $(x_0,y_0,-z_0)$
	($z_0>0$ and $(x_0,y_0)=(r_0\cos\varphi_0,r_0\sin\varphi_0)$ with $0<r_0<1$)
	and a boundary arc $(\cos\varphi,\sin\varphi,0) \in \partial\Omega$
	for $\varphi\in[\alpha,\beta]$ with $\alpha < \beta$.
	If there exists $\varphi\in[\alpha, \beta]$ such that
	\begin{equation}
		\frac{3r_0^2\sin^2(\varphi-\varphi_0)-\bracks{1-r_0\cos(\varphi-\varphi_0)}^2}
		{\bracks{1-r_0\cos(\varphi-\varphi_0)}^2\bracks{1+r_0^2-2r_0\cos(\varphi-\varphi_0)}}
		<z_0^{-2}.
	\label{eq:criterion_3}
	\end{equation}
	\noindent Then $u$ is not optimal for $J$.
\end{corollary}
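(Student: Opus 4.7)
The plan is to deduce \cref{cor:nonopt} directly from \cref{thm:nonoptimality2} via the rescaling $\hat u := u/M$ introduced at the beginning of \cref{sec:limiting_case}. Since $u \in C_M$ takes values in $[-M,0]$ and is convex and closed, $\hat u$ belongs to $\hat C$, and a conical part of $u$ with apex $(x_0,y_0,-z_0)$ and boundary arc $\{(\cos\varphi,\sin\varphi,0):\varphi\in[\alpha,\beta]\}$ transforms into a conical part of $\hat u$ with apex $(x_0,y_0,-\hat z_0)$ and the same arc, where $\hat z_0 := z_0/M$.

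Next, I would check that the criterion \eqref{eq:criterion_2} applied to $\hat u$ with $\delta := M^{-2}$ coincides with the criterion \eqref{eq:criterion_3} applied to $u$. Indeed, the geometric quantity $Z_0(r_0,\varphi-\varphi_0)$ on the left-hand side only depends on the apex's projection $(x_0,y_0)$ and the base arc, which are preserved by the rescaling; on the right-hand side,
\[
\delta\,\hat z_0^{-2} \;=\; M^{-2}\,\bigl(z_0/M\bigr)^{-2} \;=\; z_0^{-2},
\]
so the two inequalities are literally the same.

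Finally, I would invoke the identity $J_M(v) = M^2\,J(M\,v)$ together with the bijection $v\leftrightarrow M\,v$ between $\hat C$ and $C_M$ to conclude that $\hat u$ is optimal in $\hat C$ for $J_M$ if and only if $u$ is optimal in $C_M$ for $J$. Applying \cref{thm:nonoptimality2} to $\hat u$ therefore yields non-optimality of $\hat u$ for $J_M$, which in turn implies non-optimality of $u$ for $J$, as required.

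I do not expect any real obstacle: the work has already been done in establishing \cref{thm:nonoptimality2}, and \cref{cor:nonopt} is simply its restatement in the original (un-normalized) parameters of Newton's problem. The only point worth stating carefully is the scaling of $z_0$, so that the reader sees why the $\delta$ on the right-hand side of \eqref{eq:criterion_2} disappears after the reduction.
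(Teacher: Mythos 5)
Your proposal is correct and matches the paper's intent exactly: the corollary is left without an explicit proof precisely because it is the rescaling $\hat u = u/M$, $\delta = M^{-2}$, $\hat z_0 = z_0/M$ applied to \cref{thm:nonoptimality2}, under which $\delta\,\hat z_0^{-2} = z_0^{-2}$ and optimality transfers via $J_M(v) = M^2 J(Mv)$. No gaps.
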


\section{Non-optimality in the Class of all Convex Function of Suggested Solutions in the Literature}
\label{sec:non-optimal_suggestions}
In this section,
we apply \cref{cor:nonopt}
to some conjectured solutions.
In particular, we address the contributions
\cite{LachandRobertPeletier2001,Wachsmuth,LokutsievskiyZelikin2}.

\subsection{Conjectured solutions by \citeauthor{LachandRobertPeletier2001} (\citeyear{LachandRobertPeletier2001})}
We proceed in chronological order
and start with the bodies given in
\cite{LachandRobertPeletier2001}.
Therein, the authors studied Newton's problem
in a restricted class of functions
and obtained bodies
which are the convex hull of
$\Omega \times \{0\} \cup N_0 \times \{-M\}$,
where $N_0 \subset \R^2$ is a regular polygon centered at $0$.
We note that the (global) non-optimality of these bodies
was already observed in
\cite{LachandNumeric,Wachsmuth}
via the comparison with the numerical solutions.
We will check that the (local) non-optimality also follows from \cref{cor:nonopt}.
Let us assume that $N_0$ is a regular polygon with $k \ge 2$
vertices
and we rotate $N_0$ such that one vertex is given by $(x_0, 0)$
for some $x_0 \in (0,1)$.
Then, it is clear that the body
contains a conical part with parameters
\begin{align*}
	(x_0, y_0, z_0) &= (x_0, 0, -M) = (r_0\cos\varphi_0, r_0\sin\varphi_0, -M)
	&
	r_0 &= x_0,
	\\
	\varphi_0 &= 0,
	&
	\beta &= -\alpha = \pi / k.
\end{align*}
The body with $M = 1.0$ is shown in
\cref{fig:conical_parts} (top left).
\begin{figure}[tp]
	\centering
	\includegraphics[width=.48\textwidth]{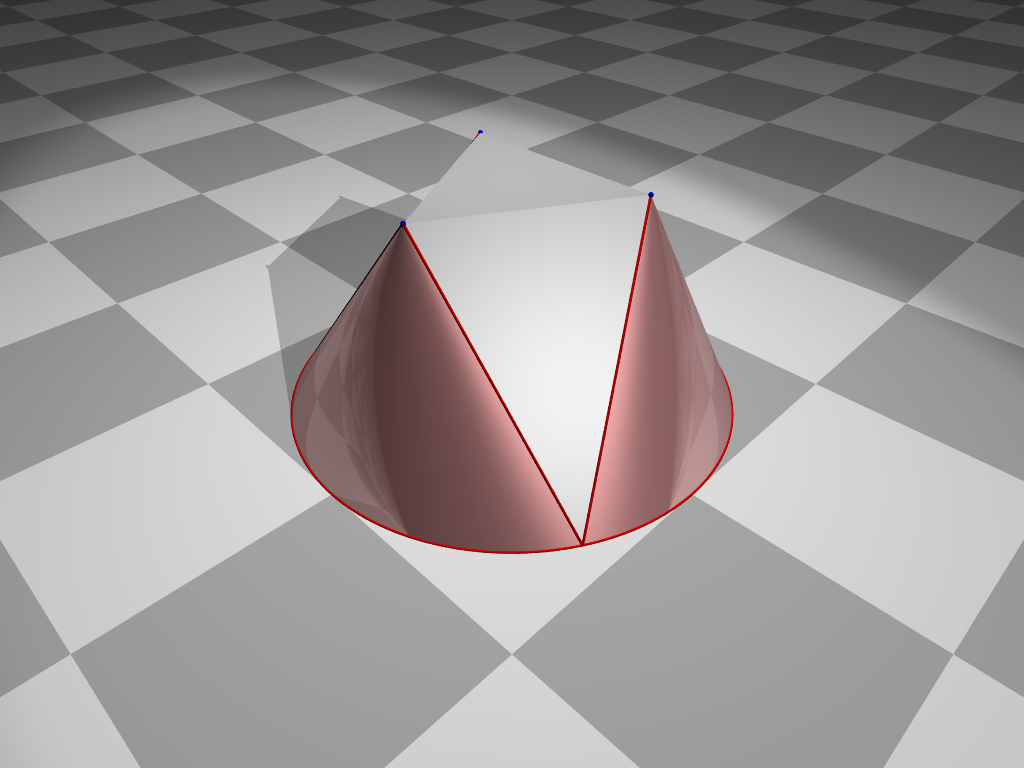}%
	\hspace{.2cm}%
	\includegraphics[width=.48\textwidth]{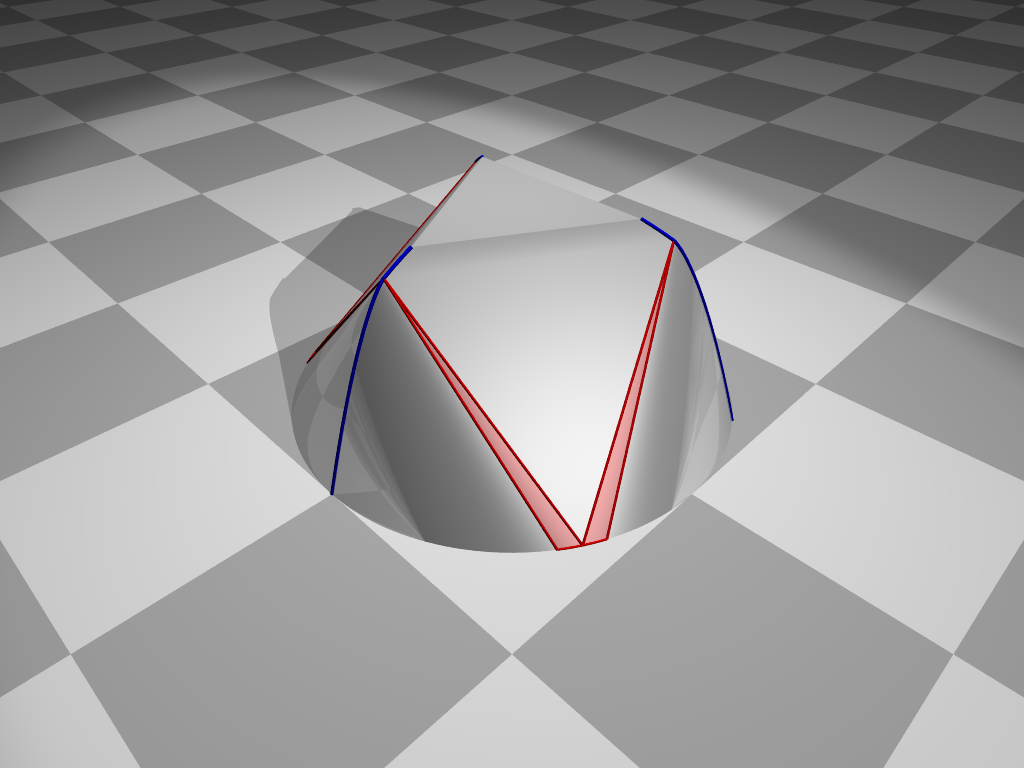}\\[.2cm]
	\includegraphics[width=.48\textwidth]{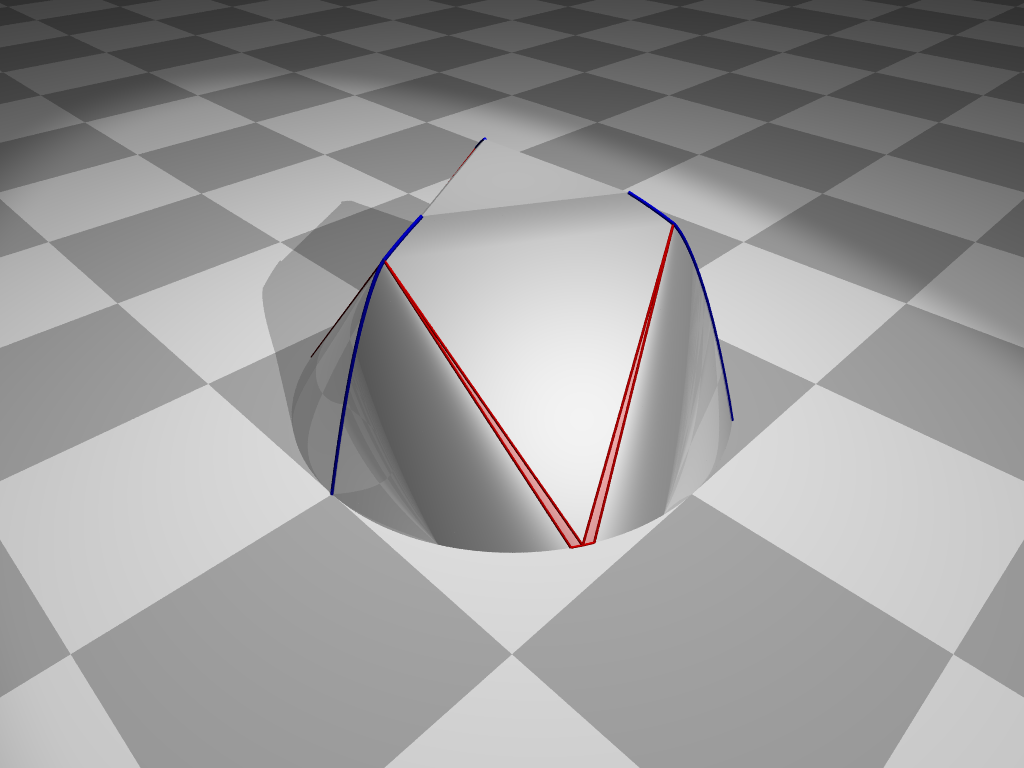}%
	\hspace{.2cm}%
	\includegraphics[width=.48\textwidth]{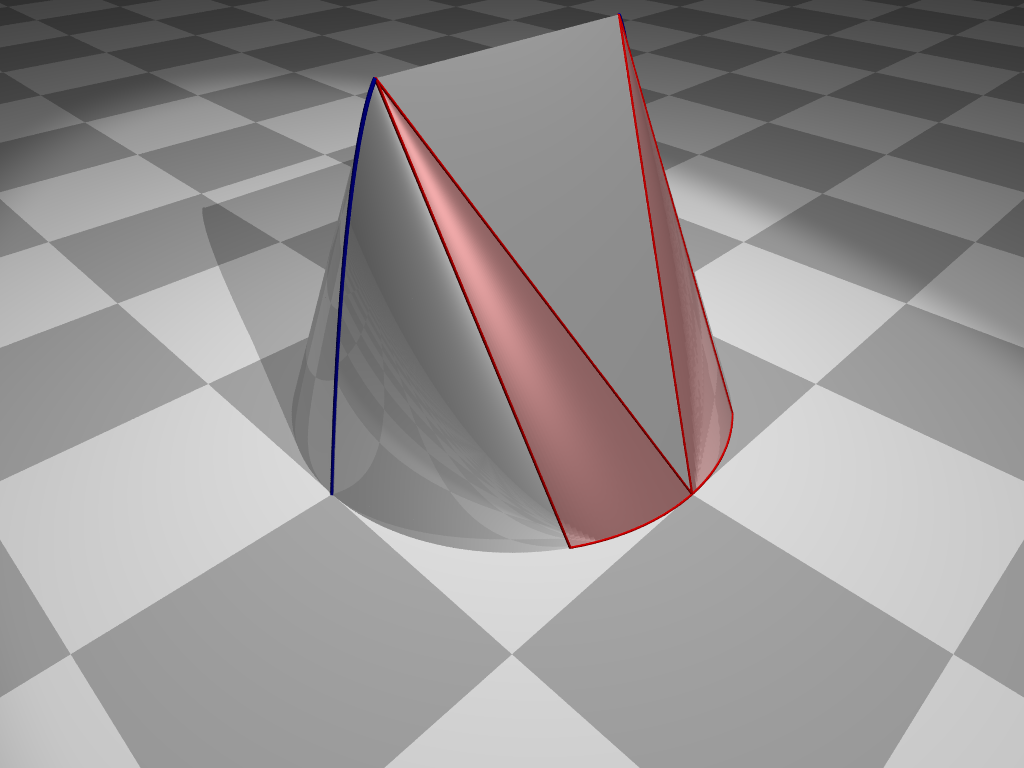}\\[.2cm]
	\includegraphics[width=.48\textwidth]{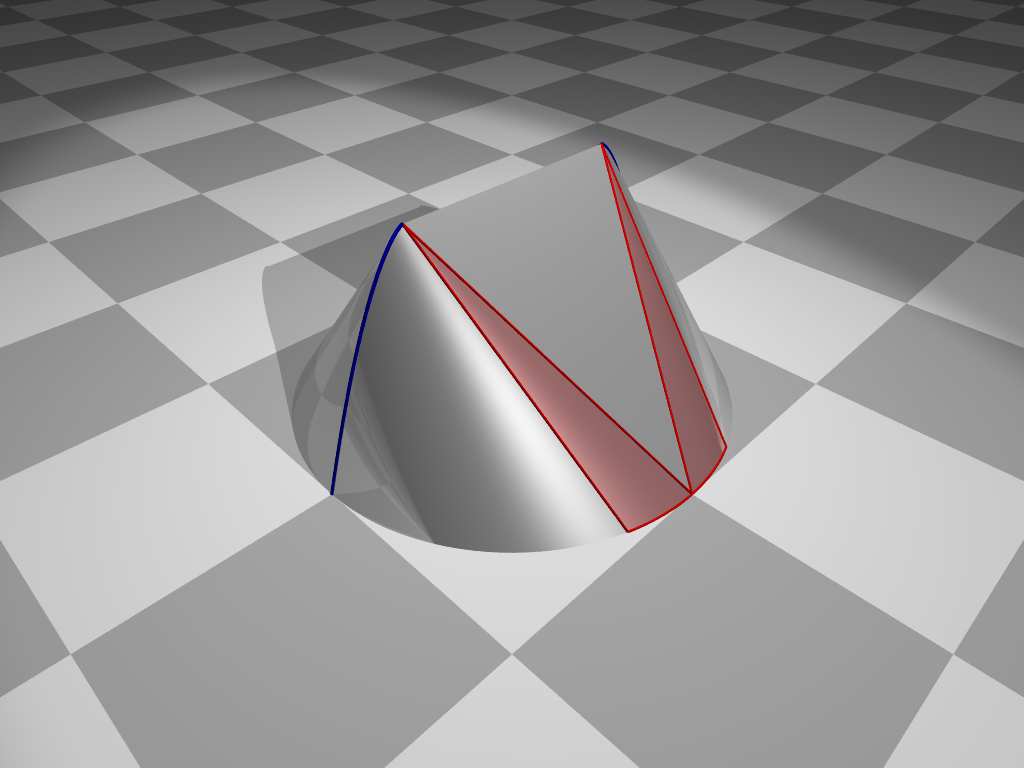}%
	\hspace{.2cm}%
	\includegraphics[width=.48\textwidth]{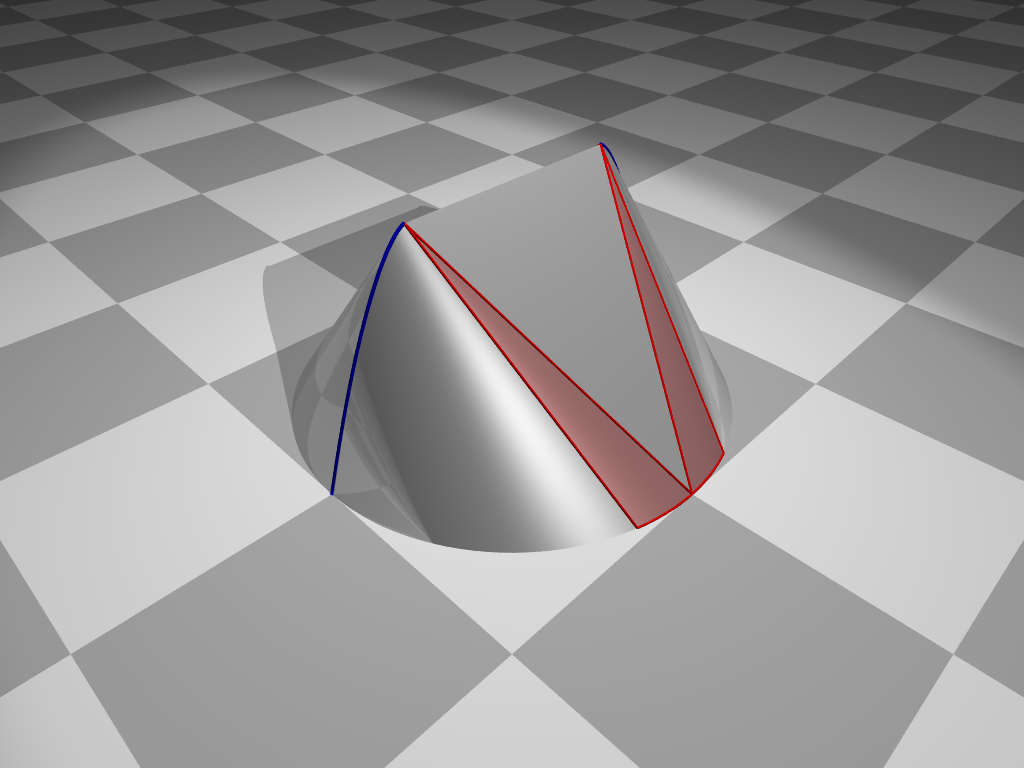}%
	\caption{%
		Illustration of conjectured solutions (shown upside down) containing non-optimal conical parts
		(highlighted red):
		\cite{LachandRobertPeletier2001} with $M = 1.0$ (top left),
		\cite{Wachsmuth} with $(M, k) = (0.9, 3)$ (top right)
		and with $(M, k) = (1.0, 3)$ (middle left),
		\cite{LokutsievskiyZelikin2} with $M = 1.5$ (middle right),
		$M = 5.0$ (bottom left),
		$M = \infty$ (bottom right).
		The bodies in the bottom row are rescaled to height $1.0$, cf.\ \eqref{problem:J_M}.
		All the bodies are constructed as the convex hull of the blue points
		and the base $\Omega \times \{0\}$.
	}
	\label{fig:conical_parts}
\end{figure}
Now, it is easy to check that the left-hand side of inequality
\eqref{eq:criterion_3}
is negative
for $\varphi = 0 \in (\alpha,\beta)$
and, therefore,
\eqref{eq:criterion_3} holds true.

\textit{Hence, the bodies 
suggested by \citeauthor{LachandRobertPeletier2001} (\citeyear{LachandRobertPeletier2001})
cannot be optimal for any value of $M > 0$
and $k \ge 2$.}

\subsection{Conjectured solutions by \citeauthor{Wachsmuth} (\citeyear{Wachsmuth})}
\label{subsec:conj_wachsmuth}
Next, we investigate the structural conjecture
from \cite[Section~3]{Wachsmuth}
where the author has supposed that
optimal bodies for a height $M \in (0,\bar M)$,
with $\bar M  \in (1.4, 1.5)$,
have the following structure.
There exists $k\in\N$, $k\ge 3$, and 
a convex function $g \colon [0,1] \to [-M, 0]$,
$g(0) = -M$, $g(1) = 0$,
such that the optimal body is the convex hull
of the set
\begin{equation*}
	\partial\Omega \times \{0\}
	\cup
	\{
		(r \, \cos(2\,i\,\varphi), r \, \sin(2\,i\,\varphi), g(r)), \; i=0,\ldots,k-1,\; r \in [0,1]
	\}
	,
\end{equation*}
where $\varphi = \pi / k$.
Examples with $M = 1.0$ and $M = 0.9$
are depicted in \cref{fig:conical_parts}
(top right and middle left).
The extremal lines
\begin{equation*}
	\{
		(r \, \cos(2\,i\,\varphi), r \, \sin(2\,i\,\varphi), g(r)), \; i=0,\ldots,k-1,\; r \in [0,1]
	\}
\end{equation*}
are highlighted in blue.
Under some natural assumptions on $g$,
the problem becomes a one-dimensional problem
of calculus of variations and can be solved for $g$ by the corresponding
Euler-Lagrange equations.
The obtained class of solutions
contains a conical part (denoted by Region~II therein),
see \cite[Figure~6]{Wachsmuth}
and \cref{fig:conical_parts} (middle left).
However, for the results presented in
\cite[Table~3]{Wachsmuth}
(reproduced and extended in \cref{tab:conjectured_solutions}),
only the
solution corresponding to $M = 1.0$ (the height parameter is denoted by $L$ in \cite{Wachsmuth})
and symmetry parameter $k = 3$ (denoted by $m$ in \cite{Wachsmuth})
contains this conical part
and
for all other presented solutions, this conical part vanishes.
Moreover, it can be checked
that \cref{cor:nonopt}
applies to this solution with $M = 1.0$ and $k = 3$.
Hence, the structural conjecture of
\cite{Wachsmuth}
cannot be true for this height $M = 1.0$.

The (non-optimal) body from \cite[Section~3]{Wachsmuth} with $(M,k) = (1.0, 3)$ is displayed in \cref{fig:conical_parts} (middle left).
Note that the conical parts are rather small.
We believe that the non-optimality of this body
is (informally speaking)
only due to these small conical parts.
Therefore, we expect that the objective value
can be improved only by a small amount
and this seems to be hard to achieve via numerical methods.
We also show the body corresponding to 
$(M,k) = (0.9, 3)$ (top right) which has a larger conical part.

In \cref{tab:conjectured_solutions},
we present an updated\footnote{%
	There is one significant difference to
	\cite[Table~3]{Wachsmuth}:
	the given objective value corresponding to $M = 1.0$ and $k = 3$
	was suboptimal and has been corrected in \cref{tab:conjectured_solutions}.%
} and completed version of
\cite[Table~3]{Wachsmuth}.
\begin{table}
	\centering
	\scriptsize
	\setlength{\tabcolsep}{2.5pt} 
	\begin{tabular}{l*{6}{l}}
		\toprule
		$M$ \textbackslash\ \rlap{$k$} & \multicolumn{1}{c}{3} & \multicolumn{1}{c}{4} & \multicolumn{1}{c}{5} & \multicolumn{1}{c}{6} & \multicolumn{1}{c}{7} & \multicolumn{1}{c}{8} \\
		\midrule
		$1.5$ & $0.6999489$    & $0.7123013$    & $0.7218282$    & $0.7285828$    & $0.7334158$    & $0.7369567$   \\
		$1.4$ & \underline{$0.7677364$}    & $0.7792767$    & $0.7887148$    & $0.7955246$    & $0.8004380$    & $0.8040548$   \\
		$1.3$ & \underline{$0.8441426$}    & $0.8544163$    & $0.8635882$    & $0.8703656$    & $0.8753090$    & $0.8789708$   \\
		$1.2$ & \underline{$0.9303614$}    & $0.9387924$    & $0.9474690$    & $0.9540986$    & $0.9590059$    & $0.9626708$   \\
		$1.1$ & \underline{$1.0277294$}    & $1.0335975$    & $1.0414890$    & $1.0478237$    & $1.0526095$    & $1.0562233$   \\
		$1.0$ & \underline{$1.1377294$ CN} & $1.1401510$    & $1.1468980$    & $1.1527533$    & $1.1573095$    & $1.1608036$   \\
		$0.9$ & $1.2619895$ CN & \underline{$1.2599052$}    & $1.2650665$    & $1.2702151$    & $1.2744081$    & $1.2776964$   \\
		$0.8$ & $1.4022724$ CN & \underline{$1.3944389$}    & $1.3974884$    & $1.4016551$    & $1.4053219$    & $1.4082995$   \\
		$0.7$ & $1.5604532$ CN & \underline{$1.5454605$}    & $1.5457807$    & $1.5486392$    & $1.5515845$    & $1.5541263$   \\
		$0.6$ & $1.7392117$ CN & $1.7147979$    & \underline{$1.7116824$}    & $1.7128535$    & $1.7148496$    & $1.7168060$   \\
		$0.5$ & $1.9438498$ CN & $1.9043842$    & $1.8970564$    & \underline{$1.8961062$}    & $1.8968909$    & $1.8980899$   \\
		$0.4$ & $2.1775037$ CN & $2.1176695$ C  & $2.1038722$    & $2.1003292$    & \underline{$2.0996063$}    & $2.0998537$   \\
		$0.3$ & $2.4390875$ CN & $2.3611648$ CN & $2.3354683$ C  & $2.3276201$ C  & $2.3250270$    & $2.3241056$   \\
		$0.2$ & $2.7171218$ CN & $2.6385156$ CN & $2.5993030$ C  & $2.5828003$ C  & $2.5760193$ C  & $2.5731257$ C \\
		$0.1$ & $2.9786691$ CN & $2.9338815$ CN & $2.8990387$ CN & $2.8754404$ CN & $2.8612811$ C  & $2.8533036$ C \\
		\bottomrule
	\end{tabular}
	\caption{%
		Conjectured optimal values using the conjecture from \cite[Section~3]{Wachsmuth}.
		A ``C'' indicates that this solution contains a conical part.
		For the solutions marked by ``N'', \cref{cor:nonopt} applies and, therefore, those solutions cannot be optimal
		in the class of all convex bodies $C_M$.
		The minimal (conjectured) solutions for each $M$ are underlined.
	}
	\label{tab:conjectured_solutions}
\end{table}
In this table,
we underlined the best solution in each row, i.e.\ 
for each height parameter $M$.
Note that for $M = 1.5$ a better solution 
was obtained numerically in \cite[Section~2]{Wachsmuth}
whereas for $M \le 0.3$
a structured solution with $k = 9$ produces better values than
the solutions given in the table.
Hence, we do not underline solutions
in the lines corresponding to $M = 1.5$
and $M \le 0.3$.

For each solution presented in \cref{tab:conjectured_solutions},
we checked whether this solution contains a conical part
(indicated by ``C'')
and whether \cref{cor:nonopt}
applies to this conical part
and provides the non-optimality
(indicated by ``N'').
For each fixed $k$
it seems that conical parts appear for small values of
$M$ (depending on $k$)
and that, eventually, this conical part becomes non-optimal.
However, for $k \ge 4$
the non-optimality appears only for ``very small''
values of $M$ and for these values, $k + 1$
provides a better solution.
Hence, for $M \le 0.9$ (and, therefore, $k \ge 4$)
we cannot apply \cref{cor:nonopt}
and we cannot disprove the conjecture of \cite[Section~3]{Wachsmuth}.
For $M$ between $1.0$ and $1.4$ the situation is different.
Here, the best results (according to the structural conjecture of \cite[Section~3]{Wachsmuth}) are obtained by $k = 3$
and these contain non-optimal conical parts for
heights $M$ that are smaller than approximately $1.0$.
In particular, we can apply \cref{cor:nonopt}
for the height $M = 1.0$
and therefore,
the conjecture of \cite[Section~3]{Wachsmuth}
is disproved for this value.
For $M$ bigger than $1.1$, the solutions with $k = 3$
do not contain conical parts and therefore,
we cannot disprove the conjecture
for $M$ between $1.1$ and $1.4$.

In \cref{tab:conjectured_solutions_detail},
we list some more values for $M \in [0.9, 1.1]$
and $k \in \{3,4\}$.
\begin{table}
	\centering
	\begin{tabular}{l*{3}{l}}
		\toprule
		$M$ \textbackslash\ $k$ & \multicolumn{1}{c}{3} & \multicolumn{1}{c}{4} \\
		\midrule
		$1.10$ &  \underline{$1.0277294$   } & $1.0335975$ \\
		$1.09$ &  \underline{$1.0381352$ C } & $1.0437014$ \\
		$1.08$ &  \underline{$1.0486688$ CN} & $1.0539240$ \\
		$1.07$ &  \underline{$1.0593317$ CN} & $1.0642667$ \\
		$1.06$ &  \underline{$1.0701256$ CN} & $1.0747311$ \\
		$1.05$ &  \underline{$1.0810520$ CN} & $1.0853184$ \\
		$1.04$ &  \underline{$1.0921125$ CN} & $1.0960303$ \\
		$1.03$ &  \underline{$1.1033089$ CN} & $1.1068681$ \\
		$1.02$ &  \underline{$1.1146427$ CN} & $1.1178333$ \\
		$1.01$ &  \underline{$1.1261157$ CN} & $1.1289274$ \\
		$1.00$ &  \underline{$1.1377294$ CN} & $1.1401510$ \\
		\bottomrule
	\end{tabular}%
	\hspace{1cm}%
	\begin{tabular}{l*{3}{l}}
		\toprule
		$M$ \textbackslash\ $k$ & \multicolumn{1}{c}{3} & \multicolumn{1}{c}{4} \\
		\midrule
		$1.00$ &  \underline{$1.1377294$ CN} & $1.1401510$ \\
		$0.99$ &  \underline{$1.1494856$ CN} & $1.1515072$ \\
		$0.98$ &  \underline{$1.1613861$ CN} & $1.1629969$ \\
		$0.97$ &  \underline{$1.1734324$ CN} & $1.1746215$ \\
		$0.96$ &  \underline{$1.1856264$ CN} & $1.1863837$ \\
		$0.95$ &  \underline{$1.1979697$ CN} & $1.1982830$ \\
		$0.94$ &  $1.2104642$ CN & \underline{$1.2103218$} \\
		$0.93$ &  $1.2231117$ CN & \underline{$1.2225019$} \\
		$0.92$ &  $1.2359138$ CN & \underline{$1.2348243$} \\
		$0.91$ &  $1.2488725$ CN & \underline{$1.2472919$} \\
		$0.90$ &  $1.2619895$ CN & \underline{$1.2599052$} \\
		\bottomrule
	\end{tabular}
	\caption{%
		Similar as \cref{tab:conjectured_solutions}, but for different values of $M$ and $k$.
	}
	\label{tab:conjectured_solutions_detail}
\end{table}
This table suggests the following observations:
\begin{itemize}
	\item
		For $M \ge 1.09$, the bodies conjectured in \cite[Section~3]{Wachsmuth} might be optimal
		since these bodies do not contain conical parts 
		or their conical parts do not satisfy~\cref{cor:nonopt}.
	\item
		For $M \in [0.95, 1.08]$, the conjectured optimal bodies contain
		a non-optimal conical part and, therefore, the structural conjecture
		of \cite[Section~3]{Wachsmuth} is disproved
		for these values of $M$.
	\item
		For $M \in [0.90, 0.94]$, 
		our non-optimality result does not apply
		to
		the conjectured bodies with $k = 4$
		and these bodies possess better values than those with $k = 3$.
		However,
		the bodies with symmetry parameter $k = 3$
		are not locally optimal in $C_M$
		by~\cref{cor:nonopt}.
		It is also clear that our variation from \cref{sec:nonoptimality}
		can be modified to produce bodies with a threefold symmetry
		which possess smaller objective values than those indicated in \cref{tab:conjectured_solutions_detail}
		for $k = 3$.
		In particular, these improved values could be smaller than the
		corresponding values with $k = 4$ from \cref{tab:conjectured_solutions_detail}
		and
		this would disprove the structural conjecture of \cite{Wachsmuth} for some values
		of $M$ around $0.94$.
		This is subject to future research.
\end{itemize}

\textit{To summarize,
\cref{cor:nonopt}
disproves the conjectured bodies from \cite[Section~3]{Wachsmuth}
at least for $M \in [0.95, 1.08]$, see \cref{tab:conjectured_solutions_detail}.
It does not apply for $M \le 0.94$ and $M \in [1.09, 1.4]$,
and for these values, the conjecture might be true.}

\subsection{Conjectured solutions by \citeauthor{LokutsievskiyZelikin2} (\citeyear{LokutsievskiyZelikin2})}
\label{subsec:lokutsievskiy_zelikin_solution}
In \cite{LokutsievskiyZelikin2},
the authors study the class $E_M$ of convex bodies
of height $M$
which can be written as the convex hull
of the union of the base $\Omega \times \{0\}$
and of a convex curve $z=v^*(x_1)$
in the plane $\{x_2 = 0\}$
(we keep notations of \cite{LokutsievskiyZelikin2},
and $v^*$ denotes the Legendre-Young-Fenchel
transform of a convex function $v$).
We note that this approach is similar to \cref{subsec:conj_wachsmuth}
with $k = 2$.
The authors proved local optimality
of such bodies in the corresponding class
(see \cite[Theorem~9.1]{LokutsievskiyZelikin2}).

In this paper, there is a table
with numerically found parameters
of the locally optimal curve $v^*$
for some different values of the height $M$
(see \cite[Table~1]{LokutsievskiyZelikin2}).
The solution $v^*$ has 
a horizontal line segment
in the front of the body, 
since $v$
has a corner at $0$.
It can be checked that
all the bodies from
the table
contain a conical part.
Exemplarily, we have shown the bodies corresponding to $M = 1.5$
in \cref{fig:conical_parts} (middle right)
and to $M = 5.0$ (rescaled to height $1.0$, bottom left).
The conical part is given by
the vertex
$(x_0, y_0, z_0) = (v'(+0), 0, -M)$
and
the arc with angles $[\alpha,\beta] = [\pi/2 - \varepsilon, \pi/2]$
for some\footnote{Using the notation from \cite{LokutsievskiyZelikin2}, $\varepsilon=\arcsin [r(p_0)/(M+v'(0)r(p_0))]$, e.g.\ $\varepsilon \approx 0.574610$ for $M=1.5$ and $\varepsilon \approx 0.330507 $ for $M=5.0$.}
 $\varepsilon > 0$.
Now, it is easily checked that inequality \eqref{eq:criterion_3}
from \cref{cor:nonopt}
is fulfilled for
$r_0 = v'(+0)$, $\varphi_0 = 0$, $\varphi = \beta = \pi/2$ and $z_0 = M$.
Therefore,
this conical part is always non-optimal in the class $C_M$ of all convex bodies.
Hence,
it seems that the optimal bodies in the class $E_M$ are never optimal in $C_M$.

A similar approach can be used for the limiting problem
of minimizing $J_\infty$ in the class $E_1$.
Using the same strategy, one obtains the values
(with the notation of \cite{LokutsievskiyZelikin2})
\begin{align*}
	p_0       &\approx 3.167203701258, &
	r(p_0)    &\approx 0.3451623687826, \\
	v'(+0)    &\approx 0.5300674211893, &
	J_\infty(u) &\approx 2.140225047120 
	.
\end{align*}
The corresponding body is shown in \cref{fig:conical_parts} (bottom right).
Again, this body has a conical part (with $\varepsilon\approx 0.296085$),
which is non-optimal by \cref{cor:nonopt}.

\medskip

\textit{Thus, the minimizers of $J_\infty$ in $\hat C$ do not belong to~$E_1$. 
Hence, the family found in \cite{LokutsievskiyZelikin2} 
cannot be asymptotically optimal for $J$ in $C_M$ by~\cref{prop:asymp_limit} 
despite the fact that it is locally optimal 
in $E_M$ by \cite[Theorem~9.1]{LokutsievskiyZelikin2}.}

\subsection*{Acknowledgement}
Gerd Wachsmuth acknowledges fruitful discussions
with Luca Landwehrjohann back in 2017
which spawned the seeds for some of the ideas in the present paper.

The work of Lev Lokutsievskiy was performed at the Steklov International Mathematical Center and supported by the Ministry of Science and Higher Education of the Russian Federation (agreement no. 075-15-2019-1614).
The work of Gerd Wachsmuth was partially supported by the DFG Grant \emph{Approximation of Non-Smooth Optimal Convex Shapes with Applications in Optimal Insulation and Minimal Resistance}
(Grant No.\ WA 3636/5-2)
within the Priority Program SPP 1962 (Non-smooth
and Complementarity-based Distributed Parameter Systems: Simulation and Hierarchical Optimization).
The work of Mikhail Zelikin was supported by Russian Foundation for Basic Research under grant 20-01-00469.

\printbibliography

\end{document}